\documentclass[11pt]{amsart}
\usepackage[a4paper]{geometry}
\usepackage{graphicx}
\usepackage{amsfonts}
\usepackage{amsmath}
\usepackage{amssymb}
\usepackage{amsthm}
\usepackage{newlfont}
\usepackage{yhmath}
\usepackage{color}

\newtheorem{pro}{Proposition}[section]
\newtheorem{thm}{Theorem}[section]
\newtheorem{cor}{Corollary}[section]

\newcommand{\e}{\mathbb{E}}
\newcommand{\p}{\mathbb{P}}

\allowdisplaybreaks

\usepackage{tikz}
\usetikzlibrary{positioning,shapes,arrows}
\usepackage{ytableau}  
\allowdisplaybreaks         
\ytableausetup{boxsize=1em}   

\begin{document}

\title{Transition Density of an Infinite-dimensional diffusion with the Jack Parameter
}

\author{Youzhou Zhou}
\address{Department of Pure Mathematics\\
Xi'an Jiaotong-Liverpool University \\
111 Renai Road\\
Suzhou, Jiangsu, China 215 123}

\email{youzhou.zhou@xjtlu.edu.cn}
\thanks{This research is supported by NSFC: 11701570.}

\subjclass[2010]{Primary 60J60; secondary 60C05}

\keywords{Jack graph, Transition density, Dual process, up-down Markov chain, Kingman coalescent}

\begin{abstract}
From the Poisson-Dirichlet diffusions to the $Z$-measure diffusions, they all have explicit transition densities. In this paper, we will show that the transition densities of the $Z$-measure diffusions can also be expressed as a mixture of a sequence of probability measures on the Thoma simplex. The coefficients are the same as the coefficients in the Poisson-Dirichlet diffusions. This fact will be uncovered by a dual process method in a special case where the $Z$-measure diffusions is established through up-down chain in the Young graph. 
 \end{abstract}
 
 \date{\today}

\maketitle

 \section{Introduction} % Initial capital letter, then lower case. No full stop.

The Poisson-Dirichlet distribution $\mathrm{PD}(0,\theta),\theta>0,$ is proposed by Kingman \cite{King1} in the simplex $\overline{\nabla}_{\infty}=\{x\in[0,1]^{\infty}\mid x_{1}\geq x_{2}\geq\cdots\geq0,\sum_{i=1}^{\infty}x_i\leq 1\}$. Pitman generalized the one-parameter Poisson Dirichlet distribution $\mathrm{PD}(0,\theta)$ to the two-parameter Poisson-Dirichlet distribution $\mathrm{PD}(\alpha,\theta),\alpha\in(0,1),\theta+\alpha>0$, in \cite{EP}. They are also the representing measure of the Ewens-Pitman partition structure. Partition structure coined by Kingman in \cite{King} is an exchangeable partition distribution $M$ of the set $\mathbb{N}=\{1,2,\cdots,n,\cdots\}$. Due to its exchangeability, the restriction of $M$ to $\{1,2,\cdots,n\}$ is $M_n(\eta)=\dim(\eta)\varphi(\eta)$, where $\eta=(\eta_1,\cdots,\eta_l), |\eta|=:\sum_{i=1}^l\eta_i=n,$ is an integer partition of $n$. Moreover, $\varphi(\eta)$ is the probability of a single partition with cluster sizes $\eta$. Due to exchangeability, two partitions have the same probability as long as their cluster sizes are the same. So $\dim(\eta)$ is the multiplicities of such partitions with cluster size $\eta$. Then the family of distributions $\{M_{n},n\geq1\}$ will satisfy a natural consistent condition and they are uniquely determined by a representing measure in the Kingman simplex $\overline{\nabla}_{\infty}$ due to its exchangeability and the de Finetti theorem. 

Let $\Gamma_n$ be the totality of integer partitions of $n$, then $\Gamma=\cup_{n\geq0}^{\infty}\Gamma_n$ exhausts all integer partitions, and $\Gamma_{0}=\emptyset$ is treated as an empty partition. Usually $\Gamma$ can be the vertex set of a graded branching diagram $(\Gamma,\chi)$ where $\chi(\eta,\omega)$ assigns positive weight to an edge joining $\eta$ and $\omega$, and $0$ otherwise (see Figure \ref{bd}). Each integer partition is represented by a Young diagram (see Figure \ref{yd}), we say $\eta\subset\zeta$ if the Young diagram of $\eta$ is contained in the Young diagram of $\zeta$. There are various kinds of edge weights. In the algebra $\cal{A}$ of  symmetric functions with variables $(x_1,\cdots,x_n,\cdots)$, there are various kinds of linear bases $\{f_{\eta},\eta\in\Gamma\}$(refer to \cite{Mac} or Appendix), such as the monomial functions, the Shur functions and the Jack functions. These functions usually satisfy the Pieri formula 
$$
\left(\sum_{i=1}^{\infty}x_i\right)f_{\eta}=\sum_{\eta\subset\zeta, |\eta|+1=|\zeta|}\chi(\eta,\zeta)f_{\zeta},
$$
where $\eta$ can be obtained from $\zeta$ by removing a box. The edge weights are chosen to be the coefficients in the above Pieri formula. The weights from the monomial functions, the Shur functions and the Jack functions define the Kingman graph, the Young graph and the Jack graph respectively. The specialization of the symmetric functions can be regarded as an algebra homomorphism $\Phi: \cal{A}\to \mathbb{R}$. In particular, a specialization mapping base functions to positive values will determine a positive harmonic function $\varphi(\eta)=\Phi(f_{\eta})$ on $\Gamma$, satisfying 
$$
\varphi(\eta)=\sum_{\eta\subset\zeta, |\eta|+1=|\zeta|}\chi(\eta,\zeta)\varphi(\zeta).
$$ 
Under pointwise convergence topology, the space $\cal{C}$ of all positive harmonic functions becomes compact and convex. Therefore, one can expect that $\varphi(\eta)=\int_{E} K(\eta,x)\mu(dx)$ where $E$ is the Martin boundary of $\cal{C}$, $K(\eta,x)$ is the extremity and $\mu(dx)$ is a probability measure.  For the Kingman graph, this representation is the Kingman's one-to-one correspondence of the partition structures \cite{King} and $K(\eta,x)$ is the continuous extension of the monomial functions. For the Jack graph, the representation is also established in \cite{KOO}, and $K(\eta,x)$ is the extended Jack functions.  The $Z$-partition structure is a partition structure defined on the Jack graph, and its representing measure is called $Z$-measure, denoted as  $\mathcal{Z}(z,z^{\prime},\vartheta), z,z^{\prime}\in\mathbb{C},\vartheta>0$.

Petrov established the two-parameter Poisson-Dirichlet diffusion with the stationary distribution $\mathcal{PD}(\alpha,\theta)$ through an up-down Markov chain on the Kingman graph \cite{Pet}. 
 Similarly, Olshanski \cite{Ol_2} has established a reversible diffusion on the Thoma simplex
\begin{align*}
\Omega=\{(\alpha,\beta)\in[0,1]^{\infty}\times[0,1]^{\infty}\mid& \alpha_1\geq\alpha_2\geq\cdots\geq0,\\
&\beta_1\geq\beta_2\geq\cdots\geq0,\sum_{i=1}^{\infty}\alpha_i+\sum_{j=1}^{\infty}\beta_j\leq1\}.
\end{align*}
Its stationary distribution is the measure $\mathcal{Z}(z,z^{\prime},\vartheta)$. In this paper, we will use the $Z$-measure diffusion to refer to the diffusion obtained by Olshanski in \cite{Ol_2}. 

Interestingly, both types of diffusions have similar explicit transition densities, and the spectrums of their generators are also the same $\{0,\frac{n(n-1+\theta)}{2}\mid n\geq2\}$ where $\theta=\frac{zz^{\prime}}{\vartheta}>0$ for the $Z$-measure diffusion.

Recently, spectral expansion is applied to the $Z$-measure diffusion again  in \cite{SY_K} to derive its explicit transition density. Similar method has previously been used to establish the transition density of the two-parameter Poisson-Dirichlet diffusion in \cite{FSWX}. In fact, this method was first adopted by Ethier in \cite{ET}. Surprisingly, by rearranging the density in \cite{FSWX}, Zhou \cite{Zhou} yields the following expression 
\begin{equation}
p(t,x,y)=\widetilde{d_1^{\theta}}(t)+\sum_{n=2}^{\infty}d_n^{\theta}(t) p_{n}(x,y),\label{ex}
\end{equation}
 where $p_n(x,y)$ is a transition kernel and $\{\widetilde{d}_1^{\theta}(t),d_n^{\theta}(t),n\geq2\}, \theta>-1$ is the distribution of a pure death process related to the Kingman coalescent in \cite{Zhou}. 
  
  In this paper, we will rearrange the transition density of the $Z$-measure diffusion as the treatment in \cite{Zhou}. We obtain that the $Z$-measure diffusion has a similar expression 
 \begin{equation}
q(t,\sigma,\omega)=\widetilde{d_1^{\theta}}(t)+\sum_{n=2}^{\infty}d_n^{\theta}(t) \mathcal{K}_{n}(\sigma,\omega)\label{zf},
\end{equation}
 where $\mathcal{K}_{n}(\sigma,\omega)$ is also a transition kernel and $\{\widetilde{d}_1^{\theta}(t),d_n^{\theta}(t),n\geq2\}, \theta>0$ is the same as the coefficients in (\ref{ex}). 
 
 The next natural question would be why their transition densities have the same coefficients. This question has been resolved for the two-parameter Poisson-Dirichlet diffusion in \cite{GSMZ} by a dual process method.
 In order to understand why the $Z$-diffusion also has the similar coefficients in its density expression (\ref{zf}), we will also apply the dual process method to the $Z$-measure diffusion $Y_t$. But we can only find the dual process of the the $Z$-measure diffusion when $\vartheta=1$ because we relies on the equation (5) in \cite{Ol_2}. When $\vartheta\neq 1$, similar equation is not known. For $\vartheta=1$, the dual process $\mathcal{D}_{t}$ is also a partition-valued jump process characterized by the generator 
$$
 \mathcal{L}_1^{J}f(\eta)=-\frac{n(n-1+\theta)}{2}f(\eta)+\frac{n(n-1+\theta)}{2}\sum_{\zeta\subset\eta,|\eta|=|\zeta|+1} p_1^{\downarrow}(\eta,\zeta)f(\zeta),~ \mathcal{L}_1^{J}1=0,
$$
 where $p_1^{\downarrow}(\eta,\zeta)$ is the transition probability of the down Markov chain in the Jack graph discussed in \cite{Ol_2}, $f$ is the continuous function on the Thoma simplex. The dual relation is defined through a bivariate function $F(\eta, \omega)=\frac{s_{\eta}^{o}(\omega)}{\e_{z,z^{\prime},1}s_{\eta}^{o}}$ which is the normalized kernel in the representation of the Young graph. The notation $\e_{z,z^{\prime},1}$ should always be interpreted as the expectation with respect to the $Z$-measure $\mathcal{Z}(z,z^{\prime},1)$. We show that duality relation reads as $\e_{\omega}F(\eta, Z_t)=\e_{\eta}F(\mathcal{D}_{t}, \omega)$. Because the distribution of $\mathcal{D}_{t}$ is easier to calculate, then the expression (\ref{zf}) can be obtained. Now the radial process $|\mathcal{D}_{t}|$ of the dual process $\mathcal{D}_{t}$ will be exactly the same as that in the two-parameter Poisson-Dirichlet diffusion. This will eventually determine the coefficients in the density expression (\ref{zf}). Though when $\vartheta\neq1$, we can not find the find dual process, the result when $\vartheta=1$ encourage us to conjecture that the dual process of the $Z$-measure diffusion should also be a partition-valued jump process whose jump rate is $\frac{n(n-1+\theta)}{2}$ and its embedded chain is the down Markov chain in  \cite{Ol_2}.

 The plan of the paper is as follows. In section 2, we will introduce the branching diagrams and the up and down Markov chains. In section 3, we will talk about the $Z$-measure diffusion and its transition density. In section 4, we will use the dual process method to derive the transition density of the $Z$-measure diffusion when $\vartheta=1$. In the last section, a few conjectures will be presented. 

\section{Branching diagram and up-down Markov chain}
\subsection{Branching Diagram}
For $n,l\in\mathbb{Z}_{+}$, $\eta=(\eta_1,\cdots,\eta_l)\in\mathbb{N}^{l}$ is called an integer partition of $n$ if $\eta_1\geq\eta_2\geq\cdots\eta_l>0$ and $|\eta|:=\sum_{i=1}^{l}\eta_i=n$. Define $\alpha_i(\eta)=\#\{1\leq j\leq l\mid \eta_j=i\},1\leq i\leq n$, then $(\alpha_1(\eta),\cdots,\alpha_n(\eta))$ is a different representation of the partition $\eta$. Denote $\Gamma_n$ to be the set of all integer partitions of $n$, then $\Gamma=\cup_{n\geq0}\Gamma_n$ is the set of all integer partitions, where $\Gamma_0=\{\emptyset\}$ and $\emptyset$ is an empty partition. An integer partition $\eta=(\eta_1,\cdots,\eta_l)$ may also be represented by the Young diagram (Figure \ref{yd}) defined by attaching boxes at position $(i,j)$ where $1\leq i\leq l, 1\leq j\leq \eta_i$. Here the row number increases from top to bottom and the column number increases from left to right.
\begin{figure}[htbp]
\begin{center}
\ytableausetup{boxsize=1.5em}
\ydiagram{5,4,1}
\caption{Young diagram $\eta=(5,4,1)$}
\label{yd}
\end{center}
\end{figure}
In this paper, we will interchangeably use $\eta$ to represent either an integer partition or its Young diagram. We define the diagonal line of the Young diagram $\eta$ as the set of boxes $\{(i,i)\mid i\leq \eta_{i},\eta^{\prime}_{i}\}$ and $r$ to be the length of the diagonal line of $\eta$. The transposition of $\eta$ with respect to its diagonal line will give us a new partition $\eta^{\prime}$ called conjugate of $\eta$.  We say $\eta\subset \nu$ if the Young diagram of $\eta$ is contained in the Young diagram of $\nu$. Then $\subset$ is a partial order in $\Gamma$. For each box $b=(i,j)$ in the partition $\eta$, we will define its arm length as $a(b)=\eta_i-j$ and its leg length as $l(b)=\eta^{\prime}_j-i$. A partition $\eta$ may also be represented by its Frobenius coordinates $\widetilde{\eta}= (a(1,1)+\frac{1}{2},\cdots,a(r,r)+\frac{1}{2}; l(1,1)+\frac{1}{2},\cdots,l(r,r)+\frac{1}{2})$.

A branching diagram is a graded graph $(\Gamma,\chi)$ (Figure \ref{bd}), where $\Gamma=\cup_{n\geq0}\Gamma_n$ is the vertex set. There is an edge joining $\eta\in\Gamma_n$ and $\nu\in\Gamma_{n+1}$ if and only if $\eta\subset \nu$, and their edge weight is $\chi(\eta,\nu)$. 

\begin{figure}
\begin{center}
\ytableausetup
 {boxsize=0.5em}
\begin{tikzpicture}[scale=0.5]
      \node(0) at (-0.5,0) {$\emptyset$};
      \node(1) at (1,0) {\ydiagram{1}};
       \node(2a)at (2.5,0.7){\ydiagram{2}};
       \node(2b) at (2.5,-0.7){\ydiagram{1,1}};
       \node(3a) at (4.5, 1.5) {\ydiagram{3}};
      \node(3b) at (4.5,0) {\ydiagram{2,1}};
       \node(3c)at (4.5,-1.5){\ydiagram{1,1,1}};
       \node(4a) at (7.5,3){\ydiagram{4}};
       \node(4b) at (7.5,1.5) {\ydiagram{3,1}};
      \node(4c) at (7.5,0) {\ydiagram{2,2}};
       \node(4d)at (7.5,-1.4){\ydiagram{2,1,1}};
     \node(4e) at (7.5,-3){\ydiagram{1,1,1,1}};
     \path(0) edge (1);
   \path(1) edge (2a);
    \path(1) edge (2b);
     \path(2a) edge (3a);
    \path(2a) edge (3b);
     \path(2b) edge (3b);
    \path(2b) edge (3c);
     \path(3a) edge (4a);
    \path(3a) edge (4b);
     \path(3b) edge (4b);
    \path(3b) edge (4c);
    \path(3b) edge (4d);
    \path(3c) edge(4d);
    \path(3c) edge(4e);
    \end{tikzpicture}
\caption{Branching Diagram}
\label{bd}
\end{center}
\end{figure}

When the edge weights are determined by the coefficients in the Pieri formula of the monomial symmetric functions, we will have the Kingman graph. Its edge weight is defined as $\chi^{K}(\eta,\zeta)=\alpha_{\zeta_{i}}(\zeta)$ if $\zeta-\eta=(i,j)$, i.e. $\zeta$ can be obtained from $\eta$ by attaching a box at $(i,j)$.  If we choose the edge weight to be the coefficients in the Pieri formula of the Jack functions, then we end up with the Jack graph, whose edge weight, denoted as $\chi^{J}_{\vartheta}(\eta,\zeta)$, will be  
$$
\chi^{J}_{\vartheta}(\eta,\zeta)=\prod_{b}\frac{(a(b)+(l(b)+2)\vartheta)(a(b)+1+l(b)\vartheta)}{(a(b)+1+(l(b)+1)\vartheta)(a(b)+(l(b)+1)\vartheta)}
$$
where $b$ runs over all boxes in the $j$-th column of $\eta$ if $\zeta-\eta=(i,j)$. When $\vartheta=1$, then $\chi^{J}_{1}=1$ and the Jack graph reduces to the Young graph.  

We define the weight of a path $t: \eta=\eta(0)\subset\cdots\subset\eta(k)=\zeta$ as $\prod_{i=0}^{k-1}\chi(\eta(i),\eta(i+1))$. Then we define the total weight between $\eta$ and $\zeta$ in the Kingman graph as $\dim^K(\eta,\zeta)=\sum_{\eta=\eta(0)\subset\cdots\subset\eta(k)=\zeta}\prod_{i=0}^{k-1}\chi(\eta(i),\eta(i+1))$. Similarly, the total weight between $\eta$ and $\zeta$ in the Jack graph will be $\dim^J_{\vartheta}(\eta,\zeta)=\sum_{\eta=\eta(0)\subset\cdots\subset\eta(k)=\zeta}\prod_{i=0}^{k-1}\chi(\eta(i),\eta(i+1))$.
In particular, when $\eta=\emptyset$, we regard $\dim^K(\nu)=\dim(\emptyset,\nu)$ as the total weight of $\nu$ in the Kingman graph and $\dim^J_{\vartheta}(\nu)=\dim^{J}_{\vartheta}(\emptyset,\nu)$ as the total weight of $\nu$ in the Jack graph. Naturally, we have 
\begin{equation}\label{rw}
\dim^{K}(\zeta)=\sum_{\eta\subset\zeta,|\zeta|=|\eta|+1}\chi^J(\eta,\zeta)\dim^K(\eta)
\end{equation}
and
\begin{equation}\label{jw}
\dim^{J}_{\vartheta}(\zeta)=\sum_{\eta\subset\zeta,|\zeta|=|\eta|+1}\chi^J(\eta,\zeta)\dim^J_{\vartheta}(\eta).
\end{equation}

In the Kingman graph, the total weight of $\eta$ is $\dim^{K}(\eta)=\frac{n!}{\eta_1!\cdots\eta_l!}$ if $\eta=(\eta_1,\cdots,\eta_l)\in\Gamma_n$. In the Jack graph, the total of $\eta$ is $\dim^{J}_{\vartheta}(\eta)=\frac{n!}{H(\eta;\vartheta)H^{'}(\eta;\vartheta)}$, where $H(\eta;\vartheta)=\prod_{b\in\eta}(a(b)+1+l(b)\eta), H^{'}(\eta;\vartheta)=\prod_{b\in\eta}(a(b)+(1+l(b))\eta)$. When $\vartheta=1$, the total weight of $\eta$ is $\dim_{1}^{J}(\eta)=\frac{n!}{H^2(\eta;1)}$ in the Young graph. 

\subsection{Down-Up Markov Chain} Due to the equation (\ref{rw}), one can easily construct a down Markov chain $\{D_{n}^{K},n\geq1\}$ in the Kingman graph with the following transition probability 
$$
p^{\downarrow,K}(\zeta,\eta)=\frac{\chi^K(\eta,\zeta)\dim^K(\eta)}{\dim^K(\zeta)}=\frac{\alpha_{\zeta_i}(\zeta)\zeta_i}{n}, ~\zeta-\eta=(i,j),\zeta\in\Gamma_n.
$$
This down Markov chain is the embedded chain of the dual process in \cite{GSMZ}.
Similarly, one can also construct a down Markov chain $\{D_{n}^{J,\vartheta},n\geq1\}$ in the Jack graph with the following transition probability 
$$
p^{\downarrow,J}_{\vartheta}(\zeta,\eta)=\frac{\chi^J_{\vartheta}(\eta,\zeta)\dim^J_{\vartheta}(\eta)}{\dim^J_{\vartheta}(\zeta)}, ~\zeta\in\Gamma_n, \eta\in\Gamma_{n-1}.
$$
When $\vartheta=1$, this down Markov chain is the embedded chain of the dual process of the $Z$-measure diffusion that we are going to discuss in Section \ref{s4}. 

As you may see, the down Markov chain depends only on the edge weights of the graph. But we can also construct up Markov chain if we have exchangeable partition structures in the branching diagram. The Ewens-Pitman partition structure can be used to construct an up Markov chain  $\{U_n^{K,\theta,\alpha},n\geq1\}$ in the Kingman graph \cite{Pet}.  Then we can define an up-down chain $\{UD^{K,\theta,\alpha,n}_{m},m\geq1\}$ on $\Gamma_n$, updating itself as Gibbs sampler,
\begin{align}
&\p(UD^{K,\alpha,\theta,n}_{m+1}=\zeta\mid UD^{K,\alpha,\theta,n}_{m}=\eta)\nonumber\\
=&\sum_{|\nu|=n+1}\p(U_n^{K,\theta,\alpha}=\nu\mid U_n^{K,\theta,\alpha}=\eta)\p(D_n^{K}=\zeta\mid D_n^{K}=\nu)\label{gibbs}
\end{align}
Naturally, the partition distribution $M_n^{K,\theta,\alpha}$ will serve as the stationary distribution of this up-down chain. The usual space and time scaling yields the two-parameter Poisson-Dirichlet diffusion in \cite{Pet}. The Ewens-Pitman partition structure can be replicated by the Blackwell-MacQueen urn model, and it has found many applications in classification problems through Bayesian statistics \cite{BE}. 

For the Jack graph there is also a special $Z$ partition structure \cite{BO1}
\begin{equation}\label{zp}
M^{J,z,z^{\prime},\vartheta}_n(\eta)=\dim^{J}_{\vartheta}(\eta)\frac{(z)_{\eta,\vartheta}(z^{\prime})_{\eta,\vartheta}}{\theta_{(n)}H^{\prime}(\eta;\vartheta)}, ~\theta=\frac{zz^{\prime}}{\vartheta},\vartheta>0,
\end{equation}
where $z,z^{\prime}$ are either (i) $z\in\mathbb{C}-(\mathbb{Z}_{\leq0}+\vartheta \mathbb{Z}_{\geq0})$ and $z^{\prime}=\overline{z}$ (principal case)or (ii) $\vartheta$ is rational number and $z,z^{\prime}$ are real numbers belonging to an interval between two consecutive lattice points in $\mathbb{Z}+\vartheta\mathbb{Z}$(complementary case).
Here $(z)_{\eta;\vartheta}=\prod_{(i,j)\in\eta}(z+(j-1)-(i-1)\vartheta)$. The representing measure $\mathcal{Z}(z,z^{\prime},\vartheta)$ of the partition structure (\ref{zp}) is the $Z$-measure. Similarly, one can construct an up Markov chain $\{U^{J,z,z^{\prime},\vartheta}_{n},n\geq1\}$ \cite{Ol_2}, By mimicking the update in (\ref{gibbs}), one can also construct an up-down Markov chain $\{UD^{J,z,z^{\prime},\vartheta,n}_{m},m\geq1\}$ on $\Gamma_n$. Then the usual space and time scaling yields the $Z$-diffusion in \cite{Ol_2}.
As far as the author's knowledge, no quick replication of $Z$ partition structure $M^{J,z,z^{\prime},\vartheta}_n(\eta)$ has been spotted now. So whether the $Z$ partition structure can be applied to classification problems is still open.

\section{The $Z$-measure diffusion and its transition density}

\subsection{The $Z$-measure diffusion}
The diffusion approximation of the up-down Markov chain $\{UD^{J,z,z^{\prime},\vartheta,n}_{m},m\geq1\}$ on $\Gamma_n$ can be carried out by the following space scaling 
$$
\pi: \eta\to \frac{\widetilde{\eta}}{n}=\left(\frac{a(1,1)+\frac{1}{2}}{n},\cdots,\frac{a(r,r)+\frac{1}{2}}{n}; \frac{l(1,1)+\frac{1}{2}}{n},\cdots,\frac{l(r,r)+\frac{1}{2}}{n}\right),
$$ 
where $\widetilde{\eta}$ is the Frobenius coordinates of $\eta$. 
As $n\to\infty$, Olshanski in \cite{Ol_2} has shown that $\pi(UD^{J,z,z^{\prime},\vartheta,n}_{[n^2t]})$ converges to the $Z$-measure diffusion $Y_t$ on the Thoma simplex $\Omega$ with the following pre-generator  
  \begin{align*}
 A_{z,z^{\prime},\vartheta}=&\frac{1}{2}\sum_{i,j\geq2}ij(\varphi_{i+j-1}^{o}-\varphi_{i}^{o}\varphi_j^{o})\frac{\partial^2}{\partial \varphi_i^{o}\partial \varphi_j^{o}}+\frac{\vartheta}{2}\sum_{i,j\geq1}(i+j+1)\varphi_i^{o}\varphi_j^{o}\frac{\partial}{\partial \varphi_{i+j+1}^{o}}\\
 &+\frac{1}{2}\sum_{i\geq2}\left[(1-\vartheta)i(i-1)\varphi_{i-1}^{o}+(z+z^{'})i\varphi_{i-1}^{o}-i(i-1)\varphi_i^{o}-i\frac{zz^{'}}{\vartheta}\varphi_i^{o}\right]\frac{\partial}{\partial \varphi_i^{o}}.
 \end{align*}
The core of $ A_{z,z^{\prime},\vartheta}$ is spanned by $\{\varphi_j^{o},j\geq1\}$, where $\varphi_j^{o}$ is the image of $\varphi_j(x)=\sum_{i=1}^{\infty}x_i^j$ under the special algebra homomorphisms $\Phi_{\omega}: \mathcal{A}\to\mathbb{R}, \omega=(\alpha,\beta)\in\Omega,$ 
$$
\Phi_{\omega}(\varphi_{n})=\sum_{i=1}^{\infty}\alpha_i^{n}+(-\vartheta)^{n-1}\sum_{j=1}^{\infty}\beta_j^{n}, ~n\geq2, ~\Phi_{\omega}(\varphi_{1})=1. 
$$
Since the Jack functions $J_{\zeta}(x;\vartheta)$ can be written as linear combinations of $\varphi_{\eta_1}\cdots\varphi_{\eta_l}$, where $\eta=(\eta_1,\cdots,\eta_l)\subset \zeta$, then $J_{\zeta}^{o}(\omega;\vartheta):=\Phi_{\omega}(J_{\zeta}(x;\vartheta))$. 

In particular, when $\vartheta=1$, we will have the extended Shur function $s^{o}_{\zeta}(\omega):=\Phi_{\omega}(J_{\zeta}(x;1))$. The $Z$-measure diffusion reduces to the diffusion in \cite{BO2}.
 
 \subsection{Transition density of the $Z$-measure diffusion $Y_t$}
  By spectral expansion of $A_{z,z^{\prime},\vartheta}$ in the Hilbert space $L^2(\Omega, \mathcal{Z}(z,z^{\prime},\vartheta))$, Korotkikh obtained the following explicit transition density of $Y_t$ in \cite{SY_K}. 
 \begin{pro}
The transition density of $Y_t$ is 
\begin{equation}\label{spec_expan}
q(t,\sigma,\omega)=1+\sum_{m=2}^{\infty}e^{-t\lambda_m}G_m(\sigma,\omega),
\end{equation}
where $\theta=\frac{zz^{\prime}}{\vartheta}$, $\lambda_m=\frac{m(m-1+\theta)}{2}$,
$
G_m(\sigma,\omega)=\sum_{n=0}^{m}(-1)^{m-n}\binom{m}{n}\frac{(\theta+2m-1)(\theta+n)_{m-1}}{m!}\mathcal{K}_n(\sigma,\omega),
$
and
 \begin{equation}\label{kernel}
 \mathcal{K}_n(\sigma,\omega)=\sum_{|\eta|=n}\frac{j_{\eta}(\sigma;\vartheta)j_{\eta}(\omega;\vartheta)}{\mathbb{E}_{z,z^{\prime},\vartheta}j_{\eta}},~ j_{\eta}(\omega;\vartheta)=\dim_{\vartheta}^{J}(\eta)J^o_{\eta}(\omega;\vartheta).
 \end{equation}
 In particular, when $n=0,1$, $\mathcal{K}_n(\sigma,\omega)=1$.
 \end{pro}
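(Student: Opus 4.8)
The proposition is a spectral statement, so the plan is to diagonalise the self-adjoint generator $A_{z,z^{\prime},\vartheta}$ on $L^2(\Omega,\mathcal Z(z,z^{\prime},\vartheta))$ and read off the reproducing kernel of each eigenspace. Working on the polynomial core spanned by products of the $\varphi^o_i$ (equivalently by $\{J^o_\eta\}$), I would first grade $A_{z,z^{\prime},\vartheta}$ by power-sum degree, assigning degree $i$ to $\varphi^o_i$ and degree $-i$ to $\partial/\partial\varphi^o_i$ (recall $\varphi^o_1=1$, so only $i\ge 2$ occurs). Inspecting the three groups of terms gives $A_{z,z^{\prime},\vartheta}=A_0+A_{-1}$, where $A_0$ collects the degree-preserving pieces $-\tfrac12\varphi^o_i\varphi^o_j\,\partial_i\partial_j$ and $-\tfrac12[i(i-1)+\theta i]\varphi^o_i\,\partial_i$, while $A_{-1}$ (the pieces carrying $\varphi^o_{i+j-1}$, $\varphi^o_{i+j+1}$ and $\varphi^o_{i-1}$) lowers degree by one. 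A direct computation on a homogeneous monomial $\varphi^o_{\mu_1}\cdots\varphi^o_{\mu_l}$ of degree $n=|\mu|$, summing the second-order cross contributions $-\sum_{k<k'}\mu_k\mu_{k'}$ against the first-order diagonal contributions $-\tfrac12\sum_k[\mu_k(\mu_k-1)+\theta\mu_k]$, telescopes to $-\tfrac12[n^2-n+\theta n]=-\lambda_n$, so $A_0=-\lambda_n\,\mathrm{id}$ on the degree-$n$ component. Hence $A_{z,z^{\prime},\vartheta}$ is lower-triangular for the filtration $\bar V_m=\{\text{polynomials of degree}\le m\}$ and acts as $-\lambda_m$ modulo $\bar V_{m-1}$; since there are no degree-$1$ monomials in the $\varphi^o_{\ge 2}$, one has $\bar V_1=\bar V_0=$ constants, which is why $\lambda_1$ never appears.

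Next I would upgrade triangularity to an orthogonal splitting. The Jack--Pieri rule, pushed through $\Phi_\omega$ with $\Phi_\omega(\varphi_1)=1$, gives $J^o_\eta=\sum_{\zeta\supset\eta,\,|\zeta|=|\eta|+1}\chi^J_\vartheta(\eta,\zeta)J^o_\zeta$, so each lower level embeds into the top level and $\bar V_m=\mathrm{span}\{j_\eta:|\eta|=m\}$, a space of dimension $p(m)$. Because the diffusion is reversible with respect to $\mathcal Z(z,z^{\prime},\vartheta)$, $A_{z,z^{\prime},\vartheta}$ is self-adjoint; a self-adjoint operator preserving every $\bar V_m$ also preserves every $\bar V_m^{\perp}$, hence is block-diagonal, and the eigenspace for $-\lambda_m$ is exactly $E_m=\bar V_m\ominus\bar V_{m-1}$ with $A_{z,z^{\prime},\vartheta}|_{E_m}=-\lambda_m$. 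As the $\lambda_m$ are distinct and polynomials are dense in $L^2(\Omega,\mathcal Z)$, the $E_m$ furnish a complete orthogonal system, so $q(t,\sigma,\omega)=\sum_{m\ge 0}e^{-\lambda_m t}G_m(\sigma,\omega)$ with $G_m$ the reproducing kernel of $E_m$; equivalently $G_m=R_m-R_{m-1}$, where $R_m$ is the reproducing kernel of $\bar V_m$. The $m=0$ term yields the leading $1$ and, as noted, the $m=1$ term is absent.

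It remains to express $R_m=\sum_{n=0}^{m}a_{m,n}\mathcal K_n$ for explicit coefficients $a_{m,n}$, so that the telescoping $G_m=R_m-R_{m-1}$ reproduces the stated $(-1)^{m-n}\binom{m}{n}\frac{(\theta+2m-1)(\theta+n)_{m-1}}{m!}$. Each $\mathcal K_n(\cdot,\omega)$ lies in $\bar V_n\subseteq\bar V_m$, so the ansatz is admissible; imposing the reproducing property $\int R_m(\sigma,\omega)j_\zeta(\omega)\,\mathcal Z(d\omega)=j_\zeta(\sigma)$ for all $|\zeta|\le m$ turns into the relations $\sum_n a_{m,n}\sum_{|\eta|=n}\frac{\e_{z,z^{\prime},\vartheta}[j_\eta j_\zeta]}{\e_{z,z^{\prime},\vartheta}j_\eta}\,j_\eta(\sigma)=j_\zeta(\sigma)$. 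Using the $Z$-measure moment $\e_{z,z^{\prime},\vartheta}j_\eta=\dim^J_\vartheta(\eta)\,\e_{z,z^{\prime},\vartheta}J^o_\eta$ together with the Littlewood--Richardson-type expansion of the products $j_\eta j_\zeta$, the Pieri relation collapses these into a triangular recursion indexed by the single level parameter, whose solution is the displayed hypergeometric weight.

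The main obstacle is precisely this last step: evaluating $\e_{z,z^{\prime},\vartheta}[j_\eta j_\zeta]$ and solving the resulting triangular system in closed form. The combinatorics of products of Jack functions averaged against $\mathcal Z(z,z^{\prime},\vartheta)$ is heavy, and the reason the answer is clean is the level-only structure of the eigenvalues $\lambda_n=\tfrac{n(n-1+\theta)}2$: the coefficients $(-1)^{m-n}\binom{m}{n}\frac{(\theta+2m-1)(\theta+n)_{m-1}}{m!}$ are exactly the spectral transition weights of the pure-death block-counting chain of Kingman's coalescent, which is why they coincide with those in the Poisson--Dirichlet density $(\ref{ex})$. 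I would finish either by verifying directly that the claimed $G_m$ satisfies $\int G_m(\sigma,\omega)f(\omega)\,\mathcal Z(d\omega)=0$ for $f\in\bar V_{m-1}$ and reproduces on $E_m$, or by matching the coefficient recursion to the known eigen-expansion of that death process. Note that the slicker duality route $A_\omega F(\eta,\omega)=\mathcal L^J_\eta F(\eta,\omega)$ is unavailable here for general $\vartheta$, which is exactly why the $L^2$ spectral computation, with its second-moment input, is the honest path to this proposition.
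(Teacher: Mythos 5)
First, note what the paper itself does with this statement: it is not proved there at all, but quoted from Korotkikh \cite{SY_K}; the only argument the paper supplies is the dictionary between Korotkikh's kernel $K_n^{o}$ and $\mathcal{K}_n$, via Proposition \ref{jack} and the sampling formula (\ref{sap}). Your proposal therefore undertakes to reprove the cited result, which is legitimate, and its structural half is sound: the grading computation is correct (on a monomial $\varphi^o_{\mu_1}\cdots\varphi^o_{\mu_l}$ of degree $n$ the degree-preserving part contributes $-\tfrac12\bigl[(n^2-\sum_k\mu_k^2)+(\sum_k\mu_k^2-n+\theta n)\bigr]=-\lambda_n$), the Pieri-rule identification $\bar V_m=\mathrm{span}\{j_\eta:|\eta|=m\}$ is correct, and triangularity plus reversibility does yield the orthogonal decomposition $E_m=\bar V_m\ominus\bar V_{m-1}$ with eigenvalue $-\lambda_m$, hence an abstract expansion $q=1+\sum_{m\geq2}e^{-\lambda_m t}G_m$ with $G_m$ the reproducing kernel of $E_m$ (modulo analytic points you gloss over: density of polynomials in $L^2(\Omega,\mathcal{Z}(z,z^{\prime},\vartheta))$, and pointwise convergence of the series, which needs a bound like $\sup_{\sigma,\omega}|G_m(\sigma,\omega)|\leq c\,m^{dm}$ from \cite{SY_K}).

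The genuine gap is that the actual content of the proposition is the closed form of $G_m$ --- the coefficients $(-1)^{m-n}\binom{m}{n}\frac{(\theta+2m-1)(\theta+n)_{m-1}}{m!}$ in front of $\mathcal{K}_n$ --- and this is exactly the step you defer as ``the main obstacle'' without carrying it out. Moreover, the route you sketch is not merely unfinished but unpromising as stated: imposing the reproducing property on the ansatz $R_m=\sum_n a_{m,n}\mathcal{K}_n$ forces you to evaluate the mixed moments $\e_{z,z^{\prime},\vartheta}[j_\eta j_\zeta]$, which would require the structure constants for products of Jack functions averaged against the $Z$-measure (no closed form is available, and this is heavy even at $\vartheta=1$); your fallback of ``matching the recursion to the eigen-expansion of the death process'' presupposes the very coefficients to be derived. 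What makes the known proofs work (Ethier's method in \cite{ET}, implemented in \cite{FSWX} and \cite{SY_K}) is that they never touch second moments: they exploit the coherency/branching relations of the partition structure --- the sampling identity (\ref{sap}) together with the path weights $\dim^{J}_{\vartheta}(\zeta,\eta)$, equivalently the down-chain quantities $\mathcal{H}(\eta,\zeta)$ --- so that the relations between levels are linear in the $j_\eta$ and no products of Jack functions ever appear. This is also precisely the mechanism of the paper's own dual-process derivation in Section \ref{s4} (available there only for $\vartheta=1$). Without this ingredient, or some substitute computation of $G_m$, your argument establishes that a spectral expansion of the stated shape exists, but not the stated formula.
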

 
 In \cite{SY_K}, 
$
 K_n^{o}(\sigma,\omega)=\sum_{|\eta|=n}\frac{H^{\prime}(\eta;\vartheta)}{H(\eta;\vartheta)}\frac{J^{o}(\sigma;\vartheta)J^{o}(\omega;\vartheta)}{(z)_{\eta,\vartheta}(z^{\prime})_{\eta,\vartheta}}.
$ 
By Proposition \ref{jack}, Borodin and Olshanski \cite{BO1} have shown that the representing measure of the $Z$ partition structure is the $Z$ measure $\mathcal{Z}(z,z^{\prime},\vartheta)$ .
\begin{pro}\label{jack}
For a partition structure $\{M_{n},n\geq1\}$ on the Jack graph, there is a unique probability measure $\mu$ on the Thoma simplex $\Omega$ such that 
$$
M_n(\eta)=\int_{\Omega}j_{\eta}(\omega;\vartheta)\mu(d\omega),
$$
Moreover, $\mu_n(d\omega)=\sum_{\eta\in\Gamma_n}M_n(\eta)\delta_{\frac{\widetilde{\eta}}{n}}(d\omega)$ will converge weakly to $\mu$. Here 
$$
\widetilde{\eta}=\left(a(1,1)+\frac{1}{2},\cdots,a(r,r)+\frac{1}{2}; l(1,1)+\frac{1}{2},\cdots,l(r,r)+\frac{1}{2}\right)
$$ 
is the Frobenius coordinate of $\eta$.
\end{pro}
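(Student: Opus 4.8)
The plan is to recast the partition structure as a nonnegative harmonic function on the Jack graph and then obtain existence, uniqueness, and the weak convergence all at once by the \emph{ergodic method}. First I would set $\varphi(\eta)=M_{|\eta|}(\eta)/\dim^{J}_{\vartheta}(\eta)$ and note that the consistency of $\{M_n\}$, expressed through the down transition probabilities $p^{\downarrow,J}_{\vartheta}$, is exactly equivalent to $\varphi$ being harmonic, $\varphi(\eta)=\sum_{\eta\subset\zeta,|\zeta|=|\eta|+1}\chi^{J}_{\vartheta}(\eta,\zeta)\varphi(\zeta)$, normalized by $\varphi(\emptyset)=1$. Iterating consistency from level $n$ down to level $|\eta|$ produces the exact identity
$$\varphi(\eta)=\sum_{|\zeta|=n}\frac{\dim^{J}_{\vartheta}(\eta,\zeta)}{\dim^{J}_{\vartheta}(\zeta)}\,M_n(\zeta),\qquad n\geq|\eta|,\qquad(\star)$$
which will be the engine of the argument. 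Since $j_{\eta}=\dim^{J}_{\vartheta}(\eta)J^{o}_{\eta}$, the claim $M_n(\eta)=\int_{\Omega}j_{\eta}\,d\mu$ is equivalent to $\varphi(\eta)=\int_{\Omega}J^{o}_{\eta}(\omega;\vartheta)\,\mu(d\omega)$, so it suffices to build a probability measure on $\Omega$ whose moments against the extended Jack functions reproduce $\varphi$.

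Two preliminary facts would then be recorded. First, for each fixed $\omega$ the kernel $\eta\mapsto J^{o}_{\eta}(\omega;\vartheta)$ is itself harmonic: applying the homomorphism $\Phi_{\omega}$ to the Pieri identity $\varphi_1 J_{\eta}=\sum_{\zeta}\chi^{J}_{\vartheta}(\eta,\zeta)J_{\zeta}$ and using $\Phi_{\omega}(\varphi_1)=1$ gives $J^{o}_{\eta}(\omega)=\sum_{\zeta}\chi^{J}_{\vartheta}(\eta,\zeta)J^{o}_{\zeta}(\omega)$, so the kernel does send measures on $\Omega$ to harmonic functions. Second, the linear span of $\{J^{o}_{\eta}(\cdot;\vartheta)\}$ coincides with the image $\Phi_{\bullet}(\mathcal{A})$, i.e.\ the algebra of polynomials in the continuous coordinates $\omega\mapsto\Phi_{\omega}(\varphi_k)$; since these coordinates separate points of $\Omega$ (this is precisely how the Thoma simplex is coordinatized), contain the constants, and are closed under multiplication, Stone--Weierstrass shows the span is dense in $C(\Omega)$. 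This density is what will force uniqueness and will reduce weak convergence to convergence of the integrals $\int_{\Omega}J^{o}_{\eta}\,d\mu_n$.

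With these in place I would run the ergodic method. Because $\Omega$ is compact, $\{\mu_n\}$ is automatically tight and has weak subsequential limits. For any such limit $\mu$ and fixed $\eta$, continuity of $J^{o}_{\eta}$ gives $\int_{\Omega}J^{o}_{\eta}\,d\mu=\lim_j\sum_{|\zeta|=n_j}M_{n_j}(\zeta)\,J^{o}_{\eta}(\widetilde{\zeta}/n_j;\vartheta)$. Comparing with $(\star)$, matching these two limits reduces to the key asymptotic input: as $|\zeta|=n\to\infty$ with $\widetilde{\zeta}/n\to\omega\in\Omega$, the relative dimension converges to the extended Jack function, $\dim^{J}_{\vartheta}(\eta,\zeta)/\dim^{J}_{\vartheta}(\zeta)\to J^{o}_{\eta}(\omega;\vartheta)$, with control uniform enough to survive the $M_n$-average. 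Granting this, every subsequential limit satisfies $\int_{\Omega}J^{o}_{\eta}\,d\mu=\varphi(\eta)$ for all $\eta$; by the density of the previous paragraph all subsequential limits coincide, so $\mu_n$ converges weakly to a single $\mu$, which is then the unique measure representing $\{M_n\}$. This simultaneously yields existence, uniqueness, and the ``Moreover'' clause.

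The main obstacle is exactly that asymptotic statement $\dim^{J}_{\vartheta}(\eta,\zeta)/\dim^{J}_{\vartheta}(\zeta)\to J^{o}_{\eta}(\omega;\vartheta)$, together with uniformity strong enough to be averaged against an \emph{arbitrary} partition structure $M_n$. This is the genuine analytic core, namely the Jack-deformed approximation theorem of Kerov--Okounkov--Olshanski, and I would import it from \cite{KOO} rather than reprove it, obtaining it from the binomial formula for Jack polynomials and the stable asymptotics of the normalized Jack functions on the Frobenius scale $\widetilde{\zeta}/n$. The remaining ingredients (the harmonic-function reformulation, the Pieri computation, and the Stone--Weierstrass density) are routine once the coordinatization of $\Omega$ by $\{\Phi_{\omega}(\varphi_k)\}$ is available.
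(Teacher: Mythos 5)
The paper offers no proof of this proposition at all: it is stated as a known result, imported from Kerov--Okounkov--Olshanski \cite{KOO} (and used via Borodin--Olshanski \cite{BO1}), so there is no ``paper's own proof'' to diverge from. Your proposal reconstructs precisely the ergodic-method argument of that cited source --- the harmonic-function reformulation $\varphi(\eta)=M_{|\eta|}(\eta)/\dim^{J}_{\vartheta}(\eta)$, the iterated consistency identity, compactness of $\Omega$ plus Stone--Weierstrass density of the span of the $J^{o}_{\eta}$, and, as the analytic core, the uniform approximation $\dim^{J}_{\vartheta}(\eta,\zeta)/\dim^{J}_{\vartheta}(\zeta)\to J^{o}_{\eta}(\omega;\vartheta)$ on the Frobenius scale, which is exactly Theorems 6.1 and 7.1 of \cite{KOO} that the paper itself invokes later in Section 4. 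So your sketch is correct and takes essentially the same route as the paper's source; importing the approximation theorem rather than reproving it is legitimate here, since that estimate is the entire content hidden behind the citation.
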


Therefore, similar to the Ewens sampling formula, we have the following sampling formula
 \begin{equation}\label{sap}
 \mathbb{E}_{z,z^{\prime},\vartheta}j_{\eta}=M_{n}^{z,z^{\prime},\vartheta}(\eta)
 \end{equation}
 and $K_n^{o}(\sigma,\omega)=\frac{\mathcal{K}_n(\sigma,\omega)}{n!(\theta)_{n}}$.  Then one can easily see that (\ref{spec_expan}) is equivalent to the density in \cite{SY_K}.

 In this paper, we will rearrange the right hand side of the equation (\ref{spec_expan}) to yield a new representation
 \begin{thm}\label{main}
The transition density of $Y_t$ is 
\begin{equation}\label{con}
q(t,\sigma,\omega)=d_0^{\theta}(t)+d_1^{\theta}(t)+\sum_{n=2}^{\infty}d_n^{\theta}(t)\mathcal{K}_n(\sigma,\omega)
\end{equation}
where
\begin{align*}
d_{0}^{\theta}(t)=&1-\sum_{m=1}^{\infty}\frac{2m-1+\theta}{m!}(-1)^{m-1}\theta_{(m-1)}e^{-\lambda_{m}t}\\
d_{n}^{\theta}(t)=&\sum_{m=1}^{\infty}\frac{2m-1+\theta}{m!}(-1)^{m-n}\binom{m}{n}(n+\theta)_{(m-1)}e^{-\lambda_{m}t},n\geq1.
\end{align*}
 \end{thm}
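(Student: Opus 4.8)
The plan is to obtain (\ref{con}) from the spectral expansion (\ref{spec_expan}) purely by interchanging the order of summation and collecting the coefficient of each kernel $\mathcal{K}_n(\sigma,\omega)$. Abbreviate the coefficient appearing in $G_m$ by $a_{m,n}=(-1)^{m-n}\binom{m}{n}\frac{(\theta+2m-1)(\theta+n)_{m-1}}{m!}$, so that $G_m=\sum_{n=0}^m a_{m,n}\mathcal{K}_n$. The crucial---and essentially bookkeeping---observation is that $a_{m,n}$ is exactly the $m$-th summand in the definition of $d_n^\theta(t)$: since $(\theta+n)_{m-1}=(n+\theta)_{(m-1)}$ and $\theta+2m-1=2m-1+\theta$, one has $d_n^\theta(t)=\sum_{m=1}^\infty e^{-\lambda_m t}a_{m,n}$ for every $n\geq 1$, and the same identity holds for $n=0$ after subtracting the constant $1$, i.e. $d_0^\theta(t)=1+\sum_{m=1}^\infty e^{-\lambda_m t}a_{m,0}$.

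Because $\binom{m}{n}=0$ for $n>m$, each inner sum in $G_m$ may be extended to run over all $n\geq 0$, and interchanging the $m$- and $n$-summations turns $\sum_{m\geq 2}e^{-\lambda_m t}G_m$ into $\sum_{n\geq 0}\mathcal{K}_n\big(\sum_{m\geq 2}e^{-\lambda_m t}a_{m,n}\big)$. For $n\geq 2$ one has $a_{1,n}=0$, so the inner sum equals $\sum_{m\geq 1}e^{-\lambda_m t}a_{m,n}=d_n^\theta(t)$ with nothing further to check. The only genuine bookkeeping is at $n=0,1$, where $\mathcal{K}_0=\mathcal{K}_1\equiv 1$ and the missing $m=1$ terms must be tracked. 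Here I would compute $a_{1,0}=-(\theta+1)$ and $a_{1,1}=\theta+1$; then $d_0^\theta(t)=1-(\theta+1)e^{-\lambda_1 t}+\sum_{m\geq 2}e^{-\lambda_m t}a_{m,0}$ and $d_1^\theta(t)=(\theta+1)e^{-\lambda_1 t}+\sum_{m\geq 2}e^{-\lambda_m t}a_{m,1}$, so the two $m=1$ contributions cancel and $d_0^\theta(t)+d_1^\theta(t)=1+\sum_{m\geq 2}e^{-\lambda_m t}(a_{m,0}+a_{m,1})$. Since $\mathcal{K}_0=\mathcal{K}_1=1$, this is exactly the leading constant of (\ref{spec_expan}) together with the $n=0$ and $n=1$ coefficients of the rearranged series, which establishes (\ref{con}).

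The main obstacle is not the algebra but the justification of the interchange of the two infinite sums, that is, the absolute convergence of $\sum_{m\geq 2}\sum_{n=0}^m e^{-\lambda_m t}|a_{m,n}|\,|\mathcal{K}_n(\sigma,\omega)|$ for a fixed $t>0$. I would control the coefficient block by a crude estimate: using $\binom{m}{n}\leq 2^m$, $(\theta+n)_{m-1}\leq(\theta+2m)^{m-1}$ and Stirling's formula for $m!$ gives $\sum_{n=0}^m|a_{m,n}|\leq C^m$ for some $C=C(\theta)$. Together with any pointwise bound on the kernels that grows no faster than $n!\,B^n$---which I would extract from (\ref{kernel}) via Cauchy--Schwarz, $|\mathcal{K}_n(\sigma,\omega)|\leq \mathcal{K}_n(\sigma,\sigma)^{1/2}\mathcal{K}_n(\omega,\omega)^{1/2}$, and an estimate of the diagonal kernels---the summand is bounded by $m!\,(BC)^m e^{-\lambda_m t}$. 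Because $\lambda_m=\frac{m(m-1+\theta)}{2}$ grows quadratically in $m$, the factor $e^{-\lambda_m t}$ decays faster than $m!\,(BC)^m$ for every $t>0$, so the double series converges absolutely and the rearrangement is legitimate. This convergence estimate is where the real care is required; once it is in place, Theorem \ref{main} follows from the termwise identification above.
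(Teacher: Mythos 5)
Your proposal is, in its main line, exactly the paper's own proof: the paper also starts from (\ref{spec_expan}), interchanges the two summations, peels off the $n=0,1$ terms using $\mathcal{K}_0=\mathcal{K}_1\equiv 1$, and uses precisely your cancellation $a_{1,0}+a_{1,1}=0$ (stated there as $m(\theta+1)_{(m-1)}-(\theta)_{(m-1)}=0$ at $m=1$) to absorb the missing $m=1$ terms into $d_0^{\theta}(t)+d_1^{\theta}(t)$; your identification $d_n^{\theta}(t)=\sum_{m}e^{-\lambda_m t}a_{m,n}$ is the same bookkeeping. Where you genuinely differ is the justification of the interchange: the paper proves nothing here, it invokes Proposition 16 of \cite{SY_K}, namely $\sup_{\sigma,\omega\in\Omega}|G_m(\sigma,\omega)|\le c\,m^{dm}$, and cites \cite{Zhou}, \cite{SY_K} for the resulting convergence, whereas you attempt a self-contained estimate. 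Your coefficient bound $\sum_{n=0}^{m}|a_{m,n}|\le C^m$ is correct (Stirling gives roughly $(4e)^m$), and your closing observation is the right one: since $e^{-\lambda_m t}=e^{-\frac{t}{2}m(m-1+\theta)}$ decays like a Gaussian in $m$, any bound on the kernels of size $e^{o(m^2)}$ legitimizes the rearrangement, so the precise form of the kernel estimate is immaterial. The one unproved step in your sketch is that kernel estimate itself: Cauchy--Schwarz is legitimate (the weights $\e_{z,z^{\prime},\vartheta}j_{\eta}=M_n^{J,z,z^{\prime},\vartheta}(\eta)$ are positive), but you never estimate the diagonal $\mathcal{K}_n(\sigma,\sigma)$, and the specific growth $n!\,B^n$ is not obviously available. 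The gap is harmless and can be closed more simply, without Cauchy--Schwarz: the numbers $j_{\eta}(\omega;\vartheta)$, $|\eta|=n$, are nonnegative and sum to $1$ (they are the extreme partition structures in the representation of Proposition \ref{jack}; the normalization follows from the identity $\left(\sum_i x_i\right)^n=\sum_{|\eta|=n}\dim^{J}_{\vartheta}(\eta)J_{\eta}(x;\vartheta)$ in the Appendix), so $|\mathcal{K}_n(\sigma,\omega)|\le \max_{|\eta|=n}1/M_n^{J,z,z^{\prime},\vartheta}(\eta)$, and the explicit formula (\ref{zp}) bounds the right-hand side by $e^{O(n\log n)}$, which is all your Gaussian-decay argument requires. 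With that estimate supplied (or with the paper's citation of \cite{SY_K} in its place), your proof is complete and coincides with the paper's.
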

Due to the estimation of $G_m$ in \cite{SY_K},  the proof of Theorem \ref{main} is the same as the proof of Theorem 2.1 in \cite{Zhou}. 
\begin{proof}
By Proposition 16 in \cite{SY_K}, we know there exists positive constants $c,d$ such that 
$$
\sup_{\sigma,\omega\in\Omega}|G_m(\sigma,\omega)|\leq c m^{dm}.
$$
Then one can show that the series in (\ref{spec_expan}) is uniformly convergent( see \cite{Zhou} or \cite{SY_K}). Then we can switch the order of summation in (\ref{spec_expan}). 
\begin{align*}
q(t,\sigma,\omega)=&1+\sum_{m=2}^{\infty}e^{-\lambda_{m}t}\Big[\sum_{n=2}^{m}(-1)^{m-n}\binom{m}{n}\frac{(\theta+2m-1)(\theta+n)_{m-1}}{m!}\mathcal{K}_n(\sigma,\omega)\\
&+\frac{2m+\theta-1}{m!}(-1)^{m-1}(\theta+1)_{(m-1)}m\mathcal{K}_1(\sigma,\omega)\\
&+\frac{2m+\theta-1}{m!}(-1)^{m}(\theta)_{(m-1)}\mathcal{K}_0(\sigma,\omega)\Big]
\end{align*}
Since $\mathcal{K}_1(\sigma,\omega)=\mathcal{K}_0(\sigma,\omega)=1$, we have 
\begin{align*}
q(t,\sigma,\omega)=&1+\sum_{m=2}^{\infty}e^{-\lambda_{m}t}\sum_{n=2}^{m}(-1)^{m-n}\binom{m}{n}\frac{(\theta+2m-1)(\theta+n)_{m-1}}{m!}\mathcal{K}_n(\sigma,\omega)\\
&+\sum_{m=2}^{\infty}e^{-\lambda_{m}t}\Big[\frac{2m+\theta-1}{m!}(-1)^{m-1}(\theta+1)_{(m-1)}m\\
&+\frac{2m+\theta-1}{m!}(-1)^{m}(\theta)_{(m-1)}\Big]\\
=&1+\sum_{m=2}^{\infty}e^{-\lambda_{m}t}\sum_{n=2}^{m}(-1)^{m-n}\binom{m}{n}\frac{(\theta+2m-1)(\theta+n)_{(m-1)}}{m!}\mathcal{K}_n(\sigma,\omega)\\
&+\sum_{m=2}^{\infty}e^{-\lambda_{m}t}\Big[\frac{2m+\theta-1}{m!}(-1)^{m-1}(\theta+1)_{(m-1)}m\\
&+\frac{2m+\theta-1}{m!}(-1)^{m}(\theta)_{(m-1)}\Big]\\
=&1+\sum_{m=2}^{\infty}e^{-\lambda_{m}t}\sum_{n=2}^{m}(-1)^{m-n}\binom{m}{n}\frac{(\theta+2m-1)(\theta+n)_{(m-1)}}{m!}\mathcal{K}_n(\sigma,\omega)\\
&+\sum_{m=2}^{\infty}e^{-\lambda_{m}t}\frac{2m+\theta-1}{m!}(-1)^{m-1}\Big[(\theta+1)_{(m-1)}m-(\theta)_{(m-1)}\Big]
\end{align*}
Note that $(\theta+1)_{(m-1)}m-(\theta)_{(m-1)}=0$ for $m=1$. Therefore,
\begin{align*}
q(t,\sigma,\omega)
=&1+\sum_{m=2}^{\infty}e^{-\lambda_{m}t}\sum_{n=2}^{m}(-1)^{m-n}\binom{m}{n}\frac{(\theta+2m-1)(\theta+n)_{(m-1)}}{m!}\mathcal{K}_n(\sigma,\omega)\\
&+\sum_{m=1}^{\infty}e^{-\lambda_{m}t}\frac{2m+\theta-1}{m!}(-1)^{(m-1)}\Big[(\theta+1)_{(m-1)}m-(\theta)_{(m-1)}\Big]\\
=&1-\sum_{m=1}^{\infty}e^{-\lambda_{m}t}\frac{2m+\theta-1}{m!}(-1)^{(m-1)}(\theta)_{(m-1)}\\
+&\sum_{m=1}^{\infty}e^{-\lambda_{m}t}\frac{2m+\theta-1}{m!}(-1)^{(m-1)}(\theta+1)_{(m-1)}m\\
+&\sum_{m=2}^{\infty}e^{-\lambda_{m}t}\sum_{n=2}^{m}(-1)^{m-n}\binom{m}{n}\frac{(\theta+2m-1)(\theta+n)_{(m-1)}}{m!}\mathcal{K}_n(\sigma,\omega)\\
=&d_0^{\theta}(t)+d_1^{\theta}(t)+\sum_{n=2}^{\infty} \mathcal{K}_n(\sigma,\omega)\sum_{m=n}^{\infty}e^{-\lambda_{m}t}(-1)^{m-n}\binom{m}{n}\frac{(\theta+2m-1)(\theta+n)_{(m-1)}}{m!}\\
=&\widetilde{d_1^{\theta}}(t)+\sum_{n=2}^{\infty}d_{n}^{\theta}(t)\mathcal{K}_n(\sigma,\omega)
\end{align*}
\end{proof}

\begin{cor}
The diffusion $Y_t$ satisfies the following ergodic inequality
\begin{align*}
\sup_{\omega\in \Omega}\|\p_{\omega}(Y_t\in\cdot)-\mathcal{Z}(z,z^{\prime},\vartheta)(\cdot)\|_{\mathrm{Var}}\leq \frac{(\theta+1)(\theta+2)}{2}e^{-(\theta+1)t},~\theta=\frac{zz^{\prime}}{\vartheta}>0.
\end{align*}
\end{cor}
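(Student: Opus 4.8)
The plan is to start from the total-variation identity
\[
\|\p_{\omega}(Y_t\in\cdot)-\mathcal{Z}(z,z',\vartheta)\|_{\mathrm{Var}}=\tfrac12\int_{\Omega}\left|q(t,\omega,\sigma)-1\right|\mathcal{Z}(d\sigma),
\]
which holds because $q$ is the transition density of the reversible diffusion $Y_t$ relative to its invariant law $\mathcal{Z}$. Into this I substitute the expansion of Theorem \ref{main}. The first thing to record is that each $\mathcal{K}_n$ is a Markov kernel with respect to $\mathcal{Z}$: integrating (\ref{kernel}) and using the sampling formula (\ref{sap}) gives
\[
\int_{\Omega}\mathcal{K}_n(\sigma,\omega)\,\mathcal{Z}(d\omega)=\sum_{|\eta|=n}j_{\eta}(\sigma;\vartheta)=\Phi_{\sigma}(\varphi_1^{\,n})=1,
\]
where the middle equality comes from iterating the Pieri rule, $\varphi_1^{\,n}=\sum_{|\eta|=n}\dim^{J}_{\vartheta}(\eta)J_{\eta}$, and the last from $\Phi_{\sigma}(\varphi_1)=1$. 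Since $\mathcal{K}_n\ge 0$, the measures $\nu_n^{\omega}(d\sigma):=\mathcal{K}_n(\omega,\sigma)\,\mathcal{Z}(d\sigma)$ are probability measures, and $\nu_0^{\omega}=\nu_1^{\omega}=\mathcal{Z}$ because $\mathcal{K}_0=\mathcal{K}_1=1$.

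Next I reduce the problem to the coalescent death process. By \cite{Zhou} the family $\{\widetilde{d_1^{\theta}}(t),d_n^{\theta}(t)\}$ is the distribution of the pure-death process associated with the Kingman coalescent, so $d_n^{\theta}(t)\ge0$ and $\widetilde{d_1^{\theta}}(t)+\sum_{n\ge2}d_n^{\theta}(t)=1$. Theorem \ref{main} therefore exhibits $\p_{\omega}(Y_t\in\cdot)$ as the mixture $\widetilde{d_1^{\theta}}(t)\,\mathcal{Z}+\sum_{n\ge2}d_n^{\theta}(t)\,\nu_n^{\omega}$, and subtracting $\mathcal{Z}$ while using $\widetilde{d_1^{\theta}}(t)-1=-\sum_{n\ge2}d_n^{\theta}(t)$ gives
\[
\p_{\omega}(Y_t\in\cdot)-\mathcal{Z}=\sum_{n\ge2}d_n^{\theta}(t)\,(\nu_n^{\omega}-\mathcal{Z}).
\]
The components with $n\le1$ drop out because, once the dual has coalesced to a single block, it reproduces the stationary law exactly; only the event $\{|\mathcal{D}_t|\ge2\}$ contributes. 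The triangle inequality then yields $\|\p_{\omega}(Y_t\in\cdot)-\mathcal{Z}\|_{\mathrm{Var}}\le\sum_{n\ge2}d_n^{\theta}(t)\,\|\nu_n^{\omega}-\mathcal{Z}\|_{\mathrm{Var}}$, and since the smallest positive eigenvalue is $\lambda_2=\theta+1$ and the coefficients $d_n^{\theta}(t)$ decay at rate $e^{-\lambda_n t}$, the right-hand side already carries the exponential rate $e^{-(\theta+1)t}$ asserted in the statement.

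The main obstacle is the constant. Replacing each $\|\nu_n^{\omega}-\mathcal{Z}\|_{\mathrm{Var}}$ by $1$ is too crude: a termwise computation shows the bare tail $\sum_{n\ge2}d_n^{\theta}(t)$ has leading coefficient $\tfrac{(\theta+2)(\theta+3)}{2}$, which \emph{exceeds} the target $\tfrac{(\theta+1)(\theta+2)}{2}$, so the slack must come from the fact that the mixture components $\nu_n^{\omega}$ are genuinely closer to $\mathcal{Z}$ than the worst case. Because $\lambda_2=\theta+1$ is simple and strictly below $\lambda_3$, the $e^{-(\theta+1)t}$ term is governed entirely by $n=2$, so the sharp bound reduces to the $\omega$-uniform estimate
\[
\sup_{\omega\in\Omega}\|\nu_2^{\omega}-\mathcal{Z}\|_{\mathrm{Var}}=\tfrac12\sup_{\omega\in\Omega}\int_{\Omega}|\mathcal{K}_2(\omega,\sigma)-1|\,\mathcal{Z}(d\sigma)\le\frac{\theta+1}{\theta+3},
\]
together with analogous but faster-decaying estimates for $n\ge3$. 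I would attack this by writing $\int|\mathcal{K}_n-1|\mathcal{Z}=2\int(1-\mathcal{K}_n)^{+}\mathcal{Z}$ (using $\int\mathcal{K}_n\,\mathcal{Z}=1$ and $\mathcal{K}_n\ge0$), and by exploiting the explicit two-term form of $\mathcal{K}_2$ — the contributions of the partitions $(2)$ and $(1,1)$ — to evaluate this supremum exactly, the uniform estimate $\sup|G_m|\le c\,m^{dm}$ of Proposition 16 in \cite{SY_K} justifying the interchange of sum and integral and controlling the tail $n\ge3$. Pinning down this low-degree kernel supremum, and verifying that it lands at the threshold $\tfrac{\theta+1}{\theta+3}$ rather than the trivial value $1$, is the single delicate point; once it is in hand, the bound follows exactly as for the two-parameter Poisson–Dirichlet diffusion in \cite{Zhou} and \cite{GSMZ}.
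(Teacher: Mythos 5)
Your reduction is exactly the one the paper uses, although the paper never writes it out: by Theorem \ref{main}, the fact that each $\mathcal{K}_n$ in (\ref{kernel}) is a nonnegative kernel with $\int_{\Omega}\mathcal{K}_n(\omega,\sigma)\,\mathcal{Z}(d\sigma)=1$, and $\widetilde{d_1^{\theta}}(t)+\sum_{n\geq2}d_n^{\theta}(t)=1$, one gets
\[
\|\p_{\omega}(Y_t\in\cdot)-\mathcal{Z}\|_{\mathrm{Var}}\;\leq\;\sum_{n\geq2}d_n^{\theta}(t),
\]
and the paper's entire proof is then to quote from \cite{Tav} the tail bound $\sum_{n\geq2}d_n^{\theta}(t)\leq\frac{(\theta+1)(\theta+2)}{2}e^{-(\theta+1)t}$. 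Your termwise computation is correct, and it exposes a genuine problem with that quotation: from the explicit formula in Theorem \ref{main}, $d_2^{\theta}(t)=\frac{(\theta+2)(\theta+3)}{2}e^{-(\theta+1)t}+O(e^{-\lambda_3t})$, and since every $d_n^{\theta}(t)\geq0$,
\[
\lim_{t\to\infty}e^{(\theta+1)t}\sum_{n\geq2}d_n^{\theta}(t)=\frac{(\theta+2)(\theta+3)}{2}>\frac{(\theta+1)(\theta+2)}{2},
\]
so the tail inequality as printed in the paper is false for large $t$ (sanity check: as $\theta\to0$ this is the classical fact that the Kingman coalescent has more than one block at time $t$ with probability $\sim3e^{-t}$, not $\leq e^{-t}$). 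Consequently the paper's one-line proof does not establish the corollary with the constant $\frac{(\theta+1)(\theta+2)}{2}$; the constant that the paper's own route actually delivers (and that is consistent with the asymptotics, and presumably with what \cite{Tav} and \cite{Zhou} really prove) is $\frac{(\theta+2)(\theta+3)}{2}$. Flagging this discrepancy is the most valuable part of your write-up.

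However, your proposal does not prove the corollary either: it is a plan, not a proof, at exactly the step you yourself call the single delicate point. The estimate $\sup_{\omega}\|\nu_2^{\omega}-\mathcal{Z}\|_{\mathrm{Var}}\leq\frac{\theta+1}{\theta+3}$ is never established, and no evidence is offered that it is true. To see what it would require: since $j_{(2)}+j_{(1,1)}\equiv1$ on $\Omega$, the kernel $\mathcal{K}_2-1$ is rank one,
\[
\mathcal{K}_2(\omega,\sigma)-1=\frac{\bigl(u(\omega)-\bar u\bigr)\bigl(u(\sigma)-\bar u\bigr)}{\bar u(1-\bar u)},\qquad u:=j_{(2)},\ \bar u:=\e_{z,z^{\prime},\vartheta}u,
\]
so your claim amounts to the sharp bound $\sup_{\omega}|u(\omega)-\bar u|\cdot\e_{z,z^{\prime},\vartheta}|u-\bar u|\leq\frac{2(\theta+1)}{\theta+3}\,\bar u(1-\bar u)$, a nontrivial quantitative statement about the law of $j_{(2)}$ under $\mathcal{Z}(z,z^{\prime},\vartheta)$ for which no argument is given, and there is no a priori reason it lands exactly at the threshold needed. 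Moreover, even granting it, your argument controls only the asymptotic coefficient of $e^{-(\theta+1)t}$: the terms $n\geq3$ are positive, of order $e^{-\lambda_3t}$ but with constants growing in $n$, so the claimed inequality for \emph{all} $t>0$ still needs uniform bookkeeping of the whole series, and the trivial fallback $\|\cdot\|_{\mathrm{Var}}\leq1$ covers only $t\leq\frac{1}{\theta+1}\log\frac{(\theta+1)(\theta+2)}{2}$, a window that shrinks to nothing as $\theta\to0$. The defensible conclusion is that the corollary should be restated and proved with the constant $\frac{(\theta+2)(\theta+3)}{2}$, for which the paper's route (your first paragraph) works once the tail bound is correctly quoted; proving the smaller constant would require actually establishing your kernel estimate together with the $n\geq3$ and small-$t$ control.
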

This inequality can be easily derived from an inequality of tail probabilities (see \cite{Tav})
$$
\sum_{n=2}^{\infty}d_n^{\theta}(t)\leq  \frac{(\theta+1)(\theta+2)}{2}e^{-(\theta+1)t}.
$$

\section{Dual process of the $Z$-measure diffusion when $\vartheta=1$}\label{s4}

The transition density of the $Z$-measure diffusion looks so similar to the transition density of the two-parameter Poisson-Dirichlet diffusion. Given their huge differences, it is surprising that their transition densities are both mixture of distributions with exactly the same coefficients. To figure out why they have the same coefficients, we adopt the dual process method used in \cite{GSMZ}. The duality between $Y_t$ and its dual process $\mathcal{D}_{t}$ is defined through a bivariate function $F(\eta,\omega)$ where $\eta\in\Gamma$ and $\omega\in\Omega$. The bivariate function $F(\eta,\omega)$ is usually chosen to be the normalized kernel $F(\eta,\omega)=\frac{j_{\eta}(\omega;\vartheta)}{\e_{z,z^{\prime},\vartheta}j_{\eta}}$ in Proposition \ref{jack}. Then $Y_t$ and its dual $\mathcal{D}_t$ satisfy 
\begin{equation}\label{dual}
\e_{\eta}^*F(\mathcal{D}_t,\omega)=\e_{\omega}F(\eta, Y_t)
\end{equation}
where $\e_{\eta}^*$ is the expectation with respect to the distribution of $\mathcal{D}_t$. In this paper, we use the dual equation (\ref{dual}) to derive the transition density (\ref{con}) directly when $\vartheta=1$. This derivation will clearly explain why the coefficients $d_n^{\theta}(t)$ show up in the transition density (\ref{con}). However, when $\vartheta\neq1$, we fail to verify the dual process of $Y_t$ because we don't know whether the $Z$-measure diffusion has similar equation (5) in \cite{Ol_2}. 

\subsection{Dual process} In this section, we will consider test functions $g_{\eta}(\omega)=\frac{j_{\eta}(\omega;1)}{\e_{z,z^{\prime},\vartheta}j_{\eta}}$, so $\e_{z,z^{\prime},\vartheta}g(\eta)=1$. Because when $\vartheta=1$, $J_{\eta}(x;1)$ is just the Shur function $s_{\eta}(x)$. So $j_{\eta}(\omega;1)$ is  $\dim_{1}^{J}(\eta)\Phi_{\omega}(s_{\eta})=\dim_{1}^{J}(\eta)s^{o}_{\eta}(\omega)$. Moreover, by the equation (\ref{sap}), we know
$$
\e_{z,z^{\prime},1}\dim_{1}^{J}(\eta)s^{o}_{\eta}=\frac{n!}{H^2(\eta;1)}\frac{(z)_{\eta,1}(z^{\prime})_{\eta,1}}{(\theta)_{(n)}}
$$
We define $F(\eta,\omega)=g_{\eta}(\omega), \eta\in\Gamma,\omega\in\Omega$. So 
\begin{equation}\label{sam-formula}
F(\eta,\omega)=g_{\eta}(\omega)=\frac{H(\eta;1)\theta_{(n)}s_{\eta}^{o}(\omega)}{(z)_{\eta,1}(z^{\prime})_{\eta,1}}.
\end{equation}
Now consider a jump process $\mathcal{D}_t$ defined by 
 \begin{equation}
 \mathcal{L}^{J}_{1}f(\eta)=-\frac{n(n-1+\theta)}{2}f(\eta)+\frac{n(n-1+\theta)}{2}\sum_{\zeta\subset \eta,|\eta|=|\zeta|+1}\frac{\dim_{1}^{J}(\zeta)}{\dim^{J}_{1}(\eta)}f(\zeta),
 \end{equation}
 where $\frac{\dim_{1}^{J}(\zeta)}{\dim^{J}_{1}(\eta)}=p^{\downarrow,J}_{1}(\eta, \zeta)$ is the transition probability of the down Markov chain $D^{J,1}_n$ in the Young graph. Moreover $ \mathcal{L}^{J}_{1}1=0$, and because $s_{(1)}^{o}=1$ then $\mathcal{D}_t$ will be absorbed at state $\eta=(1)$. The radial process $|\mathcal{D}_t|$ of $\mathcal{D}_t$ is only determined by the jump rates $\frac{n(n-1+\theta)}{2},n\geq1$. Therefore, $|\mathcal{D}_t|$ is exactly the radial process in the Kingman coalescent by collapsing state $0$ and $1$ as a new state $1$. The distribution of $|\mathcal{D}_t|$ is obtained in \cite{Tav} and \cite{Zhou}. So as long as the jump rates are the same for the dual process, the coefficients in the their transition density, if exists, will be the same. 
\begin{thm}\label{d1}
 When $\vartheta=1$, the $Z$-measure diffusion $Y_t$ and $\mathcal{D}_t$ satisfy the following duality
\begin{equation}\label{dual-equation}
\e_{\eta}^*F(\mathcal{D}_t,\omega)=\e_{\omega}F(\eta, Y_t)
\end{equation}
 \end{thm}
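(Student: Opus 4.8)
The plan is to deduce the process-level duality (\ref{dual-equation}) from an infinitesimal (generator) duality, and to obtain the latter from the explicit action of $A_{z,z',1}$ on the extended Schur functions. Write $n=|\eta|$ and $\lambda_n=\frac{n(n-1+\theta)}{2}$; by (\ref{sam-formula}) we have $F(\eta,\cdot)=c(\eta)\,s^o_\eta$ with $c(\eta)=\frac{H(\eta;1)\theta_{(n)}}{(z)_{\eta,1}(z')_{\eta,1}}$, so $F(\eta,\cdot)$ is a polynomial in the generators $\{\varphi^o_j\}$ of the core of $A_{z,z',1}$. The whole statement reduces to the pointwise identity
\begin{equation}
(A_{z,z',1})_\omega F(\eta,\omega)=(\mathcal{L}^J_1)_\eta F(\eta,\omega),\qquad \eta\in\Gamma,\ \omega\in\Omega,\label{gen-dual}
\end{equation}
where the left operator differentiates in $\omega$ and the right one acts in the partition variable $\eta$.

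Granting (\ref{gen-dual}), I would run $\mathcal{D}$ and $Y$ independently from $\eta$ and $\omega$ and set $\Psi(s)=\e^*_\eta\,\e_\omega F(\mathcal{D}_s,Y_{t-s})$ for $s\in[0,t]$. Because $\mathcal{D}_t$ only deletes boxes, started from $\eta\in\Gamma_n$ it never leaves the finite set $\bigcup_{k\le n}\Gamma_k$, so $\mathcal{L}^J_1$ is a bounded operator and every sum over partitions is finite; on the other side $F(\eta,\cdot)$ belongs to the core of $A_{z,z',1}$, so $s\mapsto \e_\omega F(\eta,Y_s)$ is differentiable with derivative $\e_\omega(A_{z,z',1}F)(\eta,Y_s)$. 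Differentiating $\Psi$ and using that the two semigroups act on different variables gives $\Psi'(s)=\e^*_\eta\,\e_\omega\big[(\mathcal{L}^J_1-A_{z,z',1})F\big](\mathcal{D}_s,Y_{t-s})=0$ by (\ref{gen-dual}); hence $\Psi(0)=\Psi(t)$, which is exactly (\ref{dual-equation}).

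It remains to verify (\ref{gen-dual}), i.e. after cancelling $c(\eta)$,
\begin{equation}
A_{z,z',1}s^o_\eta=-\lambda_n\,s^o_\eta+\lambda_n\sum_{\zeta\subset\eta,\,|\zeta|=n-1}\frac{\dim^J_1(\zeta)}{\dim^J_1(\eta)}\,\frac{c(\zeta)}{c(\eta)}\,s^o_\zeta.\label{gen-dual-2}
\end{equation}
As a sanity check on the eigenvalue I would first isolate the top homogeneous component (in the grading $\deg\varphi^o_k=k$, with $\varphi^o_1\equiv1$): the only degree-preserving blocks of $A_{z,z',1}$ are the product piece $-\frac12\sum_{i,j\ge2}ij\,\varphi^o_i\varphi^o_j\partial_{\varphi_i}\partial_{\varphi_j}$ and the diagonal drift $-\frac12\sum_{i\ge2}[i(i-1)+i\theta]\varphi^o_i\partial_{\varphi_i}$, and using the Euler operator $E=\sum_{i\ge2}i\varphi^o_i\partial_{\varphi_i}$ together with $\sum_{i,j}ij\,\varphi^o_i\varphi^o_j\partial_{\varphi_i}\partial_{\varphi_j}=E^2-\sum_i i^2\varphi^o_i\partial_{\varphi_i}$ one finds they act on the degree-$n$ part as the scalar $-\lambda_n$, consistent with the spectrum in (\ref{spec_expan}). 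Establishing the whole of (\ref{gen-dual-2}) is the real task: I would read off the action of $A_{z,z',1}$ on $s^o_\eta$ from equation (5) of \cite{Ol_2}, which for $\vartheta=1$ expresses it through the one-box-smaller functions $s^o_\zeta$, and then absorb the constants $c(\zeta)/c(\eta)$ — built from $H(\cdot;1)$, $\theta_{(n)}$ and $(z)_{\eta,1},(z')_{\eta,1}$ — together with the dimension recursion (\ref{jw}) (where $\chi^J_1\equiv1$) to collapse the right-hand side into the down-chain probabilities $p^{\downarrow,J}_1(\eta,\zeta)=\dim^J_1(\zeta)/\dim^J_1(\eta)$ weighted by $\lambda_n$.

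I expect (\ref{gen-dual-2}) to be the main obstacle, and a pure grading argument to be insufficient. The constraint $\Phi_\omega(\varphi_1)=1$, i.e. $\varphi^o_1\equiv1$, both spoils a naive degree count — through the factors $\varphi^o_1$ the raising block and the lowering drift drop the degree by two or three, and the degree-preserving blocks act as $-\lambda_k$ rather than $-\lambda_n$ on each subleading homogeneous layer of $s^o_\eta$ — and thereby entangles all of those layers with the target functions $s^o_\zeta$. Recognizing that these contributions reorganize exactly into the single-box-down form of (\ref{gen-dual-2}) is precisely what equation (5) delivers, and it is this reliance that confines the argument to $\vartheta=1$: for $\vartheta\ne1$ the analogous action of $A_{z,z',\vartheta}$ on $J^o_\eta(\,\cdot\,;\vartheta)$ is unknown, so the duality cannot be established by this route. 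A secondary, routine point is the Pochhammer-and-hook-length bookkeeping in matching $c(\zeta)/c(\eta)$ to $p^{\downarrow,J}_1$, together with the justification of differentiating $\Psi$ and interchanging $\e_\omega$ with $A_{z,z',1}$; the latter is standard once $F(\eta,\cdot)$ lies in the core and $P_t$ preserves the domain of $A_{z,z',1}$, both of which are ensured by Olshanski's construction in \cite{Ol_2}.
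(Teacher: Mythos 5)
Your proposal follows essentially the same route as the paper: the paper establishes the generator identity $A_{z,z^{\prime},1}F(\eta,\cdot)=\mathcal{L}^{J}_{1}F(\eta,\cdot)$ by quoting the known action of $A_{z,z^{\prime},1}$ on $s^{o}_{\eta}$ (Lemma 5.4 of \cite{BO2}, which is the $\vartheta=1$ content of Olshanski's equation (5)) and then carrying out exactly the bookkeeping you describe, namely $\frac{H(\eta;1)}{H(\zeta;1)}=n\frac{\dim^{J}_{1}(\zeta)}{\dim^{J}_{1}(\eta)}$ and $\frac{\theta_{(n)}}{\theta_{(n-1)}}=n-1+\theta$, after which it concludes by citing Theorem 4.4.1 of \cite{EK}, whose proof is precisely the interpolation argument $\Psi(s)=\e^{*}_{\eta}\e_{\omega}F(\mathcal{D}_s,Y_{t-s})$ that you spell out by hand. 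The only cosmetic difference is that you reprove the Ethier--Kurtz duality theorem rather than citing it.
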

\begin{proof}
By the Lemma 5.4 in \cite{BO2}, we know 
\begin{equation}\label{recur}
A_{z,z^{\prime},1}s_{\eta}^{o}(\omega)=-\frac{n(n-1+\theta)}{2}s_{\eta}^{o}(\omega)+\frac{1}{2}\sum_{\zeta\subset \eta, |\eta|=|\zeta|+1}\frac{(z)_{\eta,1}(z^{'})_{\eta,1}}{(z)_{\zeta,1}(z^{'})_{\zeta,1}}s^{0}_{\zeta}(\omega)
\end{equation}
Due to equation (\ref{sam-formula}), we know 
$$
s^{o}_{\eta}(\omega)=g_{\eta}(\omega)\frac{(z)_{\eta,1}(z^{'})_{\eta,1}}{H(\eta;1)\theta_{(n)}},~~s^{o}_{\zeta}(\omega)=g_{\zeta}(\omega)\frac{(z)_{\zeta,1}(z^{'})_{\zeta,1}}{H(\zeta;1)\theta_{(n-1)}}
$$
Replacing $s^{o}_{\eta}$ and $s^{o}_{\zeta}$ in equation (\ref{recur}) yields 
$$
A_{z,z^{\prime},1}g_{\eta}(\omega)=-\frac{n(n-1+\theta)}{2}g_{\eta}(\omega)+\frac{1}{2}\sum_{\zeta\subset \eta, |\eta|=|\zeta|+1}\frac{H(\eta;1)\theta_{(n)}}{H(\zeta;1)\theta_{(n-1)}}g_{\zeta}(\omega)
$$
Because $\frac{H(\eta;1)}{H(\zeta;1)}=n\frac{\dim_{1}^{J}(\zeta)}{\dim_{1}^{J}(\eta)}$ and $\frac{\theta_{(n)}}{\theta_{(n-1)}}=n-1+\theta$, we have
$$
A_{z,z^{\prime},1}g_{\eta}(\omega)=-\frac{n(n-1+\theta)}{2}g_{\eta}(\omega)+\frac{n(n-1+\theta)}{2}\sum_{\zeta\subset \eta, |\eta|=|\zeta|+1}\frac{\dim_{1}^{J}(\zeta)}{\dim_{1}^{J}(\eta)}g_{\zeta}(\omega).
$$
Therefore,
$$
A_{z,z^{\prime},1}F(\eta,\omega)=\mathcal{L}^{J}_{1}F(\eta,\omega).
$$
By Theorem 4.4.1 in \cite{EK}, one can show that the $Z$-measure diffusion $Y_t$ and $\mathcal{D}_t$ satisfy the dual equation (\ref{dual-equation}). 
\end{proof}

\begin{pro}\label{dual-prob1}
The dual process $\mathcal{D}_t$ has the following transition probability
$$
\p(\mathcal{D}_t=\eta\mid \mathcal{D}_0=\nu)=d_{mn}^{\theta}(t)\mathcal{H}(\eta,\nu),~\eta\in\Gamma_{n}, \nu\in \Gamma_m, n\leq m.
$$
Here 
$$
\mathcal{H}(\eta,\nu)=\frac{\dim^{J}_{1}(\eta)\dim^{J}_{1}(\eta,\nu)}{\dim^{J}_{1}(\nu)}
$$
and
\begin{equation}\nonumber
d_{mn}^\theta(t) =
\sum_{k=n}^{m}e^{-\frac{1}{2}k(k+\theta-1)t}
(-1)^{k-n}
\frac{(2k+\theta-1)(n+\theta)_{(k-1)}}{n!(k-n)!}
\frac{ m_{[k]} }{ (\theta+m)_{(k)} }.
\end{equation}
\end{pro}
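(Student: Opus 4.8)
The plan is to exploit the product structure of the claimed formula: $d_{mn}^{\theta}(t)$ depends only on the sizes $m=|\nu|$ and $n=|\eta|$, while $\mathcal{H}(\eta,\nu)$ is a purely combinatorial factor. This reflects the fact that $\mathcal{D}_t$ is a pure death jump process whose holding rate $\lambda_n=\frac{n(n-1+\theta)}{2}$ at a state $\eta\in\Gamma_n$ depends only on $n$ and whose every jump lowers the level by exactly one. Consequently the radial process $|\mathcal{D}_t|$ is an autonomous pure death chain on $\{1,2,\dots\}$ with rates $\lambda_n$, and, conditionally on the radial trajectory, the successive positions of $\mathcal{D}_t$ form the embedded down Markov chain $D_n^{J,1}$. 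I would first identify $d_{mn}^{\theta}(t)$ as the transition probability $\p(|\mathcal{D}_t|=n\mid|\mathcal{D}_0|=m)$ of this death chain, and $\mathcal{H}(\eta,\nu)$ as the $(m-n)$-step transition probability of the down chain from $\nu$ to $\eta$, and then assemble the two.

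For the combinatorial factor, fix a saturated chain $\nu=\nu^{(0)}\supset\nu^{(1)}\supset\cdots\supset\nu^{(m-n)}=\eta$ in the Young lattice. Since $\vartheta=1$ forces $\chi^J_1\equiv1$, the one-step down probability is $p^{\downarrow,J}_1(\nu^{(i)},\nu^{(i+1)})=\dim_1^J(\nu^{(i+1)})/\dim_1^J(\nu^{(i)})$, so the product over the chain telescopes to $\dim_1^J(\eta)/\dim_1^J(\nu)$, independently of the chain. Summing over all saturated chains from $\nu$ down to $\eta$ --- of which there are exactly $\dim_1^J(\eta,\nu)$, because every edge has weight one --- yields the $(m-n)$-step down probability $\dim_1^J(\eta,\nu)\,\dim_1^J(\eta)/\dim_1^J(\nu)=\mathcal{H}(\eta,\nu)$. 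The single identity that makes the whole argument cohere is the additive branching rule obtained from \eqref{jw} with $\chi^J_1\equiv1$, namely $\sum_{\nu'\subset\nu,\,|\nu'|=m-1}\dim_1^J(\eta,\nu')=\dim_1^J(\eta,\nu)$; equivalently $\sum_{\nu'}p^{\downarrow,J}_1(\nu,\nu')\,\mathcal{H}(\eta,\nu')=\mathcal{H}(\eta,\nu)$.

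To make the factorization rigorous I would verify that $P_t(\nu,\eta):=d_{mn}^{\theta}(t)\,\mathcal{H}(\eta,\nu)$ solves Kolmogorov's backward equation for $\mathcal{L}^J_1$ with $P_0(\nu,\eta)=\delta_{\nu\eta}$. Applying $\mathcal{L}^J_1$ in the starting variable $\nu$ and using the branching identity above to pull $\mathcal{H}$ out of the sum, the spatial factor $\mathcal{H}(\eta,\nu)$ cancels and the equation collapses to the one-dimensional death-chain relation $\tfrac{d}{dt}d_{mn}^{\theta}(t)=-\lambda_m d_{mn}^{\theta}(t)+\lambda_m d_{(m-1)n}^{\theta}(t)$, with $d_{mn}^{\theta}(0)=\delta_{mn}$ and the absorbing convention at level $1$. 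That the explicit expression for $d_{mn}^{\theta}(t)$ --- a finite sum over $k$ from $n$ to $m$ --- satisfies this recursion and initial condition is exactly the transition function of the Kingman-coalescent line-counting process computed in \cite{Tav} and \cite{Zhou}, so I would cite it rather than rederive it. The initial condition reduces to $\mathcal{H}(\eta,\eta)=1$ (since $\dim_1^J(\eta,\eta)=1$) and $\mathcal{H}(\eta,\nu)=0$ for $\eta\neq\nu$ with $|\eta|=|\nu|$.

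The main obstacle is the branching identity in the second paragraph: it is the precise point where the level-dependence of the death rates decouples from the shape evolution, and everything downstream --- the telescoping of $\mathcal{H}$, the cancellation in the backward equation, and hence the product form itself --- rests on $\dim_1^J(\eta,\cdot)$ satisfying the additive recursion inherited from \eqref{jw}. Once this is in hand the remaining work is bookkeeping, since $d_{mn}^{\theta}(t)$ is a finite sum (no interchange of limits or convergence issues arise) and the death-chain transition function is quoted from the literature.
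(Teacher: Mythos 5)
Your proposal is correct, and its core is the same factorization the paper uses: the radial process $|\mathcal{D}_t|$ is an autonomous pure death chain with rates $\lambda_n=\tfrac{n(n-1+\theta)}{2}$ whose law $d_{mn}^{\theta}(t)$ is quoted from \cite{Tav}, \cite{Zhou}, while the shape marginal is the $(m-n)$-step embedded down chain, computed by exactly the telescoping you describe ($\chi^J_1\equiv1$, so the product of one-step probabilities along any saturated chain collapses to $\dim_1^J(\eta)/\dim_1^J(\nu)$ and the sum over chains produces $\dim_1^J(\eta,\nu)$). Where you genuinely diverge is in how the factorization is certified. The paper argues probabilistically: it writes $\p_\nu(\mathcal{D}_t=\eta)=\p(|\mathcal{D}_t|=n\mid\mathcal{D}_0=\nu)\,\p(\mathcal{D}_t=\eta\mid|\mathcal{D}_t|=n,\mathcal{D}_0=\nu)$ and identifies the conditional factor with the embedded-chain probability, implicitly using that the jump chain is independent of the jump times because the holding rates depend only on the level; this independence is asserted rather than proved. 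You instead verify that $P_t(\nu,\eta)=d_{mn}^{\theta}(t)\mathcal{H}(\eta,\nu)$ solves Kolmogorov's backward equation for $\mathcal{L}^J_1$ with $P_0=\delta$, using the harmonicity identity $\sum_{\nu'}p^{\downarrow,J}_1(\nu,\nu')\mathcal{H}(\eta,\nu')=\mathcal{H}(\eta,\nu)$ to strip off the spatial factor and reduce to the one-dimensional death-chain ODE. This buys rigor precisely at the point the paper glosses over, and it is legitimately self-contained: starting from a fixed $\nu\in\Gamma_m$ the process can only visit the finite set $\cup_{n\le m}\Gamma_n$, so the backward equation together with the initial condition uniquely determines the transition function --- a point you should state explicitly, since without finiteness (or some boundedness argument) solving the backward equation alone would not identify $P_t$ as \emph{the} transition probability. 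Conversely, the paper's conditioning argument is shorter and makes the probabilistic structure (why the same coefficients $d_{mn}^{\theta}$ as in the Poisson--Dirichlet case must appear) more transparent.
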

\begin{proof} By the definition of $|\mathcal{D}_{t}|$ and Proposition 2.1 in \cite{Zhou}, we know 
$$
\p(|\mathcal{D}_{t}|=n\mid|\mathcal{D}_{0}|=m)=d_{mn}^{\theta}(t).
$$
Then 
\begin{align*}
\mathbb{P}_{\nu}(\mathcal{D}_t=\eta)=&\p(\mathcal{D}_{t}=\eta\mid \mathcal{D}_0=\nu)=\mathbb{P}(\mathcal{D}_t=\eta|  |\mathcal{D}_t|=n,\mathcal{D}_0=\nu)\mathbb{P}(|\mathcal{D}_{t}|=n\mid \mathcal{D}_0=\nu)\\
=&d_{mn}^{\theta}(t)\mathbb{P}(\mathcal{D}_t=\eta|  |\mathcal{D}_t|=n,\mathcal{D}_0=\nu).
\end{align*}
For $\nu\in\Gamma_{m}, \eta\in\Gamma_{n}$, there are paths of length $k=m-n$ joining $\nu$ and $\eta$. These paths are the realizations of the embedded down Markov chain $D^{J,1}_n$. Thus, 
\begin{align*}
\mathbb{P}(\mathcal{D}_t=\eta|  |\mathcal{D}_t|=n,\mathcal{D}_0=\nu)=&\sum_{\eta=\eta(k)\subset\cdots\subset \eta(0)=\nu}\prod_{i=0}^{k-1}p^{\downarrow,J}_{1}(\eta(i),\eta(i+1))\\
=&\sum_{\eta=\eta(k)\subset\cdots\subset \eta(0)=\nu}\prod_{i=0}^{k-1}\frac{\dim_{1}^{J}(\eta(i+1))\chi^{J}_{1}(\eta(i),\eta(i+1))}{\dim_{1}^{J}(\eta(i))}\\
=&\frac{\dim_1^{J}(\eta(k))}{\dim_{1}^{J}(\eta(0))}\sum_{\eta=\eta(k)\subset\cdots\subset \eta(0)=\nu}\prod_{i=0}^{k-1}\chi^{J}_{1}(\eta(i),\eta(i+1))\\
=&\frac{\dim_1^{J}(\eta)}{\dim_{1}^{J}(\nu)}\dim_{1}^{J}(\eta,\nu).
\end{align*}
\end{proof}

\begin{pro}\label{dual-prob2}
For $j_{\eta}(\omega;1), \eta\in\Gamma_m$, we have
\begin{equation}\label{dp2}
\e_{z,z^{\prime},1}[j_{\eta}(Y_t;1)\mid Y_0=\omega]=(\e_{z,z^{\prime},1}j_{\eta})\left[d_{m1}^{\theta}(t)+\sum_{n=2}^{m}d_{mn}^{\theta}(t)\sum_{\zeta\subset\eta, |\zeta|=n}\mathcal{H}(\eta,\zeta)\frac{j_{\zeta}(\omega;1)}{\e_{z,z^{'},1}j_{\zeta}}\right]
\end{equation}
\end{pro}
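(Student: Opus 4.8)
The plan is to read off both sides of the duality in Theorem~\ref{d1} and evaluate each one explicitly; no new analytic input is needed beyond Propositions~\ref{dual-prob1} and the duality itself. Fix $\eta\in\Gamma_m$ and recall $F(\eta,\omega)=\frac{j_\eta(\omega;1)}{\e_{z,z^{\prime},1}j_\eta}$. Since $\e_{z,z^{\prime},1}j_\eta$ is a deterministic normalizing constant, the right-hand side of (\ref{dual-equation}) is
$$
\e_\omega F(\eta,Y_t)=\frac{1}{\e_{z,z^{\prime},1}j_\eta}\,\e\!\left[j_\eta(Y_t;1)\mid Y_0=\omega\right].
$$
Hence the conditional expectation on the left of (\ref{dp2}) equals $(\e_{z,z^{\prime},1}j_\eta)\,\e_\eta^*F(\mathcal{D}_t,\omega)$, and the whole problem reduces to computing $\e_\eta^*F(\mathcal{D}_t,\omega)$, the expectation over the dual trajectory started at $\mathcal{D}_0=\eta$.

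Next I would expand this expectation against the transition probability of Proposition~\ref{dual-prob1}. Because the embedded down chain driving $\mathcal{D}_t$ only moves to sub-diagrams of $\eta$, the process stays in the \emph{finite} set $\{\zeta:\zeta\subset\eta\}$, so the expectation is a finite sum and no convergence issue arises. Grouping the reachable states by their size $n=|\zeta|$ and factoring the radial weight $d_{mn}^\theta(t)$ out of the inner sum gives
$$
\e_\eta^*F(\mathcal{D}_t,\omega)=\sum_{n=1}^{m}d_{mn}^\theta(t)\sum_{\zeta\subset\eta,\,|\zeta|=n}\mathcal{H}(\zeta,\eta)\,\frac{j_\zeta(\omega;1)}{\e_{z,z^{\prime},1}j_\zeta},
$$
where the inner sum is already restricted to $\zeta\subset\eta$ because $\mathcal{H}(\zeta,\eta)$ carries the path weight $\dim^J_1(\zeta,\eta)$, which vanishes unless $\zeta\subset\eta$.

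The one genuine computation is the isolation of the $n=1$ term, where $\mathcal{D}_t$ sits at the absorbing state $(1)$. I would check the two normalizations $j_{(1)}(\omega;1)=\dim^J_1((1))\,s^o_{(1)}(\omega)=1$, whence $\e_{z,z^{\prime},1}j_{(1)}=M_1^{z,z^{\prime},1}((1))=1$ and $F((1),\omega)=1$; and, using that the unique edge out of $\emptyset$ carries weight $\chi^J_1(\emptyset,(1))=1$ (so $\dim^J_1((1),\eta)=\dim^J_1(\eta)$), that $\mathcal{H}((1),\eta)=\frac{\dim^J_1((1))\,\dim^J_1((1),\eta)}{\dim^J_1(\eta)}=1$. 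The $n=1$ summand therefore collapses to $d_{m1}^\theta(t)$, and separating it from the $n\ge2$ terms reproduces exactly the bracket in (\ref{dp2}); multiplying through by $\e_{z,z^{\prime},1}j_\eta$ finishes the identity.

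I expect the main difficulty to be bookkeeping rather than conceptual. One must apply the factorization of Proposition~\ref{dual-prob1} with $\mathcal{D}_0=\eta\in\Gamma_m$ in the role of the larger endpoint and $\mathcal{D}_t=\zeta\in\Gamma_n$ in the role of the smaller one, and correspondingly read $\mathcal{H}(\eta,\zeta)$ in (\ref{dp2}) as the single quantity $\frac{\dim^J_1(\zeta)\,\dim^J_1(\zeta,\eta)}{\dim^J_1(\eta)}$ attached to the comparable pair $\zeta\subset\eta$, i.e. with the smaller partition's dimension in the prefactor as in Proposition~\ref{dual-prob1}. Once the two $n=1$ normalizations $j_{(1)}\equiv1$ and $\mathcal{H}((1),\eta)\equiv1$ are in hand, the remainder is pure substitution.
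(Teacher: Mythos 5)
Your proposal is correct and follows essentially the same route as the paper's proof: invoke the duality relation of Theorem~\ref{d1}, expand $\e_{\eta}^{*}F(\mathcal{D}_t,\omega)$ over the finitely many states $\zeta\subset\eta$ using the transition probability $d_{mn}^{\theta}(t)\mathcal{H}(\zeta,\eta)$ from Proposition~\ref{dual-prob1}, and substitute $g_{\zeta}(\omega)=\frac{j_{\zeta}(\omega;1)}{\e_{z,z^{\prime},1}j_{\zeta}}$. Your explicit verification that the $n=1$ term collapses to $d_{m1}^{\theta}(t)$ (via $j_{(1)}\equiv 1$, $\e_{z,z^{\prime},1}j_{(1)}=1$, and $\mathcal{H}((1),\eta)=1$) is a detail the paper leaves implicit, and your reading of $\mathcal{H}(\eta,\zeta)$ in (\ref{dp2}) as the quantity with the smaller partition's dimension in the prefactor correctly resolves the paper's own inconsistency in the argument order of $\mathcal{H}$.
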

\begin{proof}
By the duality equation (\ref{dual-equation}), we know
\begin{align*}
\e_{\omega}g_{\eta}(Y_t)=\e_{\eta}^*g_{\mathcal{D}_t}(\omega)=\sum_{\zeta\subset\eta}\p_{\eta}(\mathcal{D}_t=\zeta)g_{\zeta}(\omega)=\sum_{n=1}^{m}d_{mn}^{\theta}(t)\sum_{|\zeta|=n,\zeta\subset \eta}\mathcal{H}(\zeta,\eta)g_{\zeta}(\omega).
\end{align*}
Then we have the equation (\ref{dp2}) if we replace $g_{\eta}(\omega)=\frac{j_{\eta}(\omega;1)}{\e_{z,z^{\prime},1}j_{\eta}}$ and $g_{\zeta}(\omega)=\frac{j_{\zeta}(\omega;1)}{\e_{z,z^{\prime},1}j_{\zeta}}$.
\end{proof}

\subsection{Proof of Theorem \ref{main} through the dual process method}Next we will use Proposition \ref{dual-prob2} to deduce the transition density (\ref{con}). By Proposition \ref{jack}, we know
$$
\mu_{m}(d\omega)=\sum_{|\eta|=m}\e[j_{\eta}(Y_t;1)\mid Y_0=\omega]\delta_{\frac{\widetilde{\eta}}{m}}(d\omega)
$$
will converge weakly to the distribution of $Y_t$, i.e. the transition probability $q(t,\omega, \cdot)= \p_{\omega}(Y_t\in\cdot)$. By the equation (\ref{dp2}), we know 
\begin{equation}
\mu_{n}(d\omega)=d_{m1}^{\theta}(t)\mu_{1,m}(d\omega)+\sum_{n=2}^{m}d_{mn}^{\theta}(t)\sum_{|\zeta|=n}\mu_{n,m}(d\omega)\frac{j_{\zeta}(\omega;1)}{\e_{z,z^{\prime},1}j_{\zeta}}
\end{equation}
where 
\begin{align*}
\mu_{1,m}(d\omega)=&\sum_{|\eta|=m}[\e_{z,z^{\prime},1}j_{\eta}]\delta_{\frac{\widetilde{\eta}}{m}}(d\omega)\\
\mu_{n,m}(d\omega)=&\sum_{|\eta|=m,\zeta\subset\eta}\mathcal{H}(\eta,\zeta)[\e_{z,z^{\prime},1}j_{\eta}]\delta_{\frac{\widetilde{\eta}}{m}}(d\omega),~n\geq2.
\end{align*}
As $m\to\infty$, we need to show that 
\begin{equation}\label{claim1}
\mu_{1,m}(d\omega)\to \mathcal{Z}(z,z^{\prime},\vartheta)(d\omega)
\end{equation} 
and 
\begin{equation}\label{claim2}
\mu_{n,m}(d\omega)\to j_{\zeta}(\omega;1)\mathcal{Z}(z,z^{\prime},\vartheta)(d\omega)
\end{equation}
The claim (\ref{claim1}) can be directly obtained from Proposition \ref{jack}. Now we are going to show the claim (\ref{claim2}). For any $f\in C(\Omega)$, we have
\begin{align*}
\int_{\Omega}f(\omega)\mu_{n,m}(d\omega)=&\sum_{|\eta|=m,\zeta\subset\eta}f(\frac{\widetilde{\eta}}{m})\mathcal{H}(\eta,\zeta)(\e_{z,z^{\prime},1}j_{\eta})\\
=& \sum_{|\eta|=m,\zeta\subset\eta}f(\frac{\widetilde{\eta}}{m})j_{\zeta}(\frac{\widetilde{\eta}}{m};1)(\e_{z,z^{\prime},1}j_{\eta})+O(\frac{1}{\sqrt{m}})\\
\to& \int_{\Omega}f(\omega)j_{\zeta}(\omega;1)\mathcal{Z}(z,z^{\prime},\vartheta)(d\omega)
\end{align*}
where the second equality is due to Theorem 6.1 and Theorem 7.1 in \cite{KOO}. Since 
$$
\lim_{m\to\infty}d_{mn}^{\theta}(t)=d_{n}^{\theta}(t),
$$
 then we have derived the representation in Theorem \ref{main}.

\section{Further discussion}

\subsection{Conjectures on Diffusions with Given Stationary Measures} The conclusions in this paper indicate that the dual process is only determined by the weights in the branching diagram, whether it be the Kingman graph or the Jack graph. The dual processes are determined by a generator  
$$
\mathcal{L}f(\eta)=-\frac{n(n-1+\theta)}{2}f(\eta)+\frac{n(n-1+\theta)}{2}\sum_{\zeta\subset \eta,|\eta|=|\zeta|+1}p^{\downarrow}(\eta,\zeta)f(\zeta).
$$ 
The diffusions, however, depend on the partition structures on the Kingman graph or the Jack graph. Due to the Kingman's representation theorem and the representation theorem in Proposition \ref{jack}, the partition structures are uniquely determined by their representing measures, which will be the stationary distribution of the diffusions constructed through the up-down Markov chains. 

More generally, for a given probability measure $\mu$ on the Kingman simplex $\overline{\nabla}_{\infty}$, we can consider generator 
$
\mathcal{B}_{\mu}
$
defined on an algebra $\mathcal{A}^{o}_{K}$ spanned by $\{1,\varphi_k,k\geq2\}$ as follows
\begin{equation}\label{conj1}
\mathcal{B}_{\mu}^Kg_{\eta}(x)=-\frac{n(n-1+\theta)}{2}g_{\eta}(x)+\frac{n(n-1+\theta)}{2}\sum_{\zeta\subset \eta,|\eta|=|\zeta|+1}\frac{\dim^K(\zeta)\chi^{K}(\zeta,\eta)}{\dim^K(\eta)}g_{\zeta}(x)
\end{equation}
where $\{g_{\eta}(x)=\frac{m_{\eta}(x)}{\e_{\mu} m_{\eta}}\mid \eta\in\Gamma\}$ is a linear base and $\e_{\mu}$ is the expectation with respect to the probability distribution $\mu$. It will uniquely determine the operation of $\mathcal{B}_{\mu}^K$ on $\mathcal{A}^{o}_{K}$.
\begin{thm}[Conjecture 1]
For a given probability measure $\mu$ on the Kingman simplex $\overline{\nabla}_{\infty}$, the generator defined in (\ref{conj1}) will determine a reversible diffusion $W_t^K$ with the stationary distribution $\mu$. Its transition density is 
$$
p^K(t,x,y)=d_0^{\theta}(t)+d_1^{\theta}(t)+\sum_{n=2}^{\infty}d_n^{\theta}(t)p_n(x,y),
$$
where
$$
p_n(x,y)=\sum_{|\eta|=n}\frac{\dim^K(\eta)m_{\eta}^o(x)\dim^K(\eta)m_{\eta}^o(y)}{\e_{\mu}[\dim^K(\eta)m_{\eta}^o]}.
$$
\end{thm}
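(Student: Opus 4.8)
The plan is to transplant the dual-process argument of Section~\ref{s4} from the Jack/Young graph to the Kingman graph, the crucial simplification being that the operator intertwining, which in Section~\ref{s4} required an external input, now holds essentially by construction. First I would record the candidate dual: the partition-valued jump process $\mathcal{D}_t^K$ on the Kingman graph with generator
\begin{equation*}
\mathcal{L}^K f(\eta)=-\frac{n(n-1+\theta)}{2}f(\eta)+\frac{n(n-1+\theta)}{2}\sum_{\zeta\subset\eta,\,|\eta|=|\zeta|+1}p^{\downarrow,K}(\eta,\zeta)f(\zeta),\quad \mathcal{L}^K 1=0,
\end{equation*}
whose embedded chain is the down chain $D_n^K$ of \cite{Pet,GSMZ} and whose radial process $|\mathcal{D}_t^K|$ is, by exactly the reasoning used for $\mathcal{D}_t$ in Section~\ref{s4}, the pure-death process of the Kingman coalescent. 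Its transition probabilities are then obtained verbatim as in Proposition~\ref{dual-prob1}, namely $\p_\nu(\mathcal{D}_t^K=\eta)=d_{mn}^\theta(t)\,\mathcal{H}^K(\eta,\nu)$ with $\mathcal{H}^K(\eta,\nu)=\dim^K(\eta)\dim^K(\eta,\nu)/\dim^K(\nu)$ for $\eta\in\Gamma_n$, $\nu\in\Gamma_m$. This step needs no new estimates, only the Kingman-graph analogues of the weights.

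The second step is the intertwining $\mathcal{B}_\mu^K F(\eta,x)=\mathcal{L}^K F(\eta,x)$ for $F(\eta,x)=g_\eta(x)=m_\eta(x)/\e_\mu m_\eta$. In the Young case this was the substance of Theorem~\ref{d1} and rested on Lemma~5.4 of \cite{BO2}; here, since $p^{\downarrow,K}(\eta,\zeta)=\dim^K(\zeta)\chi^K(\zeta,\eta)/\dim^K(\eta)$, the defining formula (\ref{conj1}) for $\mathcal{B}_\mu^K$ acting on the $x$-variable is literally $\mathcal{L}^K$ acting on the $\eta$-variable. Thus the intertwining is immediate and holds for \emph{every} representing measure $\mu$. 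Granting existence of the diffusion, duality then follows from this identity through Theorem~4.4.1 of \cite{EK}, giving $\e_\eta^* F(\mathcal{D}_t^K,x)=\e_x F(\eta,W_t^K)$.

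The main obstacle is precisely the existence (and reversibility) of $W_t^K$ itself. Unlike the Ewens--Pitman and $Z$ cases, $\mathcal{B}_\mu^K$ is specified only through its action on the basis $\{g_\eta\}$, and for a general $\mu$ it is not clear that this action extends to the generator of a conservative Feller diffusion on $\overline{\nabla}_\infty$ that is local, second-order in the $\varphi_k^o$ coordinates, and reversible with respect to $\mu$. The natural route, mirroring \cite{Pet} and \cite{Ol_2}, is to realise $W_t^K$ as the diffusive scaling limit of the up-down chain attached to the partition structure $\{M_n^\mu\}$ that $\mu$ represents by Kingman's correspondence; but the tightness and convergence estimates underlying such limits have been verified only for the special measures, and are unavailable at this level of generality, which is exactly why the statement is posed as a conjecture. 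An alternative that sidesteps the up-down chain is to \emph{define} the semigroup by $P_t^K g_\eta:=\e_\eta^* g_{\mathcal{D}_t^K}$ and prove it extends to a conservative Feller semigroup with a diffusion generator, but establishing the Feller extension and the second-order (diffusive) character for arbitrary $\mu$ remains the open point.

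Once existence and duality are in hand, the transition density is read off exactly as in the proof of Theorem~\ref{main} via the dual process. One writes the Kingman analogue of Proposition~\ref{dual-prob2}, forms the approximating measures $\mu_{1,m}$ and $\mu_{n,m}$ built from $\e_\mu[\dim^K(\eta)m_\eta^o]=M_m^\mu(\eta)$ and the down-paths encoded by $\mathcal{H}^K$, and lets $m\to\infty$. The weak limits $\mu_{1,m}\to\mu$ and $\mu_{n,m}\to m_\zeta^o\,\mu$ come from Kingman's representation theorem (the Kingman-graph counterpart of Proposition~\ref{jack}) together with the monomial-function asymptotics that replace Theorems~6.1 and~7.1 of \cite{KOO}, while $d_{mn}^\theta(t)\to d_n^\theta(t)$ supplies the coefficients, yielding the claimed form of $p^K(t,x,y)$ with kernel $p_n(x,y)$.
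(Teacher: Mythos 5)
The statement you are proving is labeled ``Conjecture 1'' in the paper, and the paper offers no proof of it whatsoever; there is therefore no internal proof for your proposal to be compared against. What you have written is, in outline, exactly the strategy the paper's Section \ref{s4} suggests: the dual jump process on the Kingman graph with generator $\mathcal{L}^K$, its transition function $d_{mn}^{\theta}(t)\,\mathcal{H}^K(\eta,\nu)$ (for the Kingman graph this is available from \cite{GSMZ}), the intertwining $\mathcal{B}_{\mu}^K F(\eta,\cdot)(x)=\mathcal{L}^K F(\cdot,x)(\eta)$ --- which, as you correctly observe, is tautological because (\ref{conj1}) is literally defined through $p^{\downarrow,K}$ --- and the passage $m\to\infty$ with Kingman's representation theorem playing the role of Proposition \ref{jack}. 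All of that is a faithful transplantation of Theorem \ref{d1}, Proposition \ref{dual-prob1} and Proposition \ref{dual-prob2}.

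However, your own third paragraph concedes the decisive point: nothing in this scheme produces the diffusion $W_t^K$. For the Ewens--Pitman and $Z$ partition structures, existence comes from the up-down-chain scaling limits of \cite{Pet} and \cite{Ol_2}, whose tightness and convergence arguments exploit the specific algebraic form of those partition structures; for an arbitrary representing measure $\mu$ no such construction exists, and your alternative of defining $P_t^K g_\eta:=\e_\eta^* g_{\mathcal{D}_t^K}$ leaves unproven that this prescription extends to a positivity-preserving, conservative Feller semigroup generated by a genuine (local, second-order) diffusion operator that is symmetric in $L^2(\mu)$. Everything downstream --- the application of Theorem 4.4.1 of \cite{EK}, the analogue of Proposition \ref{dual-prob2}, the limit $d_{mn}^{\theta}(t)\to d_n^{\theta}(t)$ --- is conditional on that existence. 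There is also a smaller well-definedness issue you do not flag: both (\ref{conj1}) and the kernel $p_n(x,y)$ divide by $\e_{\mu}[\dim^K(\eta)m_{\eta}^o]$, which can vanish for degenerate $\mu$ (a point mass at $x=(1,0,0,\dots)$ annihilates every $m_{\eta}^o$ with more than one part), so the conjecture implicitly needs $\e_{\mu}[\dim^K(\eta)m_{\eta}^o]>0$ for all $\eta$. In short: your proposal is a correct and useful reduction of the conjecture to the existence problem for $W_t^K$, and it identifies honestly why that problem is open; but it is a reduction, not a proof, and the statement remains exactly what the paper says it is --- a conjecture.
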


Moreover, for a given probability measure $\mu$ in the Thoma simplex $\Omega$, we can also consider generator 
$
\mathcal{B}_{\mu}^{J}
$
defined on an algebra $\mathcal{A}^{o}_{J}$ spanned by $\{1,\varphi_k^{o},k\geq2\}$ as follows
\begin{equation}\label{conj2}
\mathcal{B}_{\mu}^Jg_{\eta}(x)=-\frac{n(n-1+\theta)}{2}g_{\eta}(x)+\frac{n(n-1+\theta)}{2}\sum_{\zeta\subset \eta,|\eta|=|\zeta|+1}\frac{\dim^J_{\vartheta}(\zeta)\chi^{J}_{\vartheta}(\zeta,\eta)}{\dim^J_{\vartheta}(\eta)}g_{\zeta}(x)
\end{equation}
where $\{g_{\eta}(x)=\frac{j_{\eta}(x;\vartheta)}{\e_{\mu} j_{\eta}}\mid \eta\in\Gamma\}$ is a linear base. It will uniquely determine the operation of $\mathcal{B}_{\mu}^J$ on $\mathcal{A}^{o}_{J}$.
\begin{thm}[Conjecture 2]
For a given probability measure $\mu$ on the Thoma simplex $\Omega$, the generator defined in (\ref{conj2}) will determine a reversible diffusion $W_t^J$ with the stationary distribution $\mu$. Its transition density is 
$$
p^J(t,x,y)=d_0^{\theta}(t)+d_1^{\theta}(t)+\sum_{n=2}^{\infty}d_n^{\theta}(t)\mathcal{K}_n(x,y),
$$
where
$$
\mathcal{K}_n(x,y)=\sum_{|\eta|=n}\frac{j_{\eta}(x;\vartheta)j_{\eta}(y;\vartheta)}{\e_{\mu}[j_{\eta}]}.
$$
\end{thm}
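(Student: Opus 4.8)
The plan is to carry over, for an arbitrary representing measure $\mu$, the dual-process argument that produced Theorem \ref{main}, keeping separate the two claims packaged in the statement: (i) that $\mathcal{B}_\mu^J$ really is the generator of a reversible Feller diffusion $W_t^J$ with invariant law $\mu$, and (ii) that the associated semigroup admits the stated transition density. The bridge between them is the remark that the definition (\ref{conj2}), combined with the identity $p^{\downarrow,J}_\vartheta(\eta,\zeta)=\dim^J_\vartheta(\zeta)\chi^J_\vartheta(\zeta,\eta)/\dim^J_\vartheta(\eta)$ from Section 2, makes the bivariate function $F(\eta,x)=g_\eta(x)$ satisfy the intertwining
$$
\mathcal{B}_\mu^J F(\eta,x)=\mathcal{L}F(\eta,x),\qquad \mathcal{L}f(\eta)=-\frac{n(n-1+\theta)}{2}f(\eta)+\frac{n(n-1+\theta)}{2}\sum_{\zeta\subset\eta,|\eta|=|\zeta|+1}p^{\downarrow,J}_\vartheta(\eta,\zeta)f(\zeta),
$$
where on the left $\mathcal{B}_\mu^J$ acts on the $\Omega$-variable and on the right $\mathcal{L}$ acts on the $\Gamma$-variable, exactly as in Theorem \ref{d1}. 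Here this identity holds by construction rather than by a computation tied to the $Z$-measure, since the $\mu$-dependent normalization cancels in the ratios $g_\zeta/g_\eta$.

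Granting (i), the density in (ii) follows along the lines of Section \ref{s4}. The dual jump process $\mathcal{D}_t$ generated by $\mathcal{L}$ is unchanged: it depends only on the graph weights and on the rates $n(n-1+\theta)/2$, so its radial part $|\mathcal{D}_t|$ is again the Kingman-coalescent death chain and Proposition \ref{dual-prob1} applies verbatim, giving $\p_\nu(\mathcal{D}_t=\eta)=d_{mn}^\theta(t)\mathcal{H}(\eta,\nu)$. Feeding the intertwining into Theorem 4.4.1 of \cite{EK} yields the duality $\e_\eta^* g_{\mathcal{D}_t}(x)=\e_x g_\eta(W_t)$, and the analogue of Proposition \ref{dual-prob2} expresses $\e_\mu[j_\eta(W_t;\vartheta)\mid W_0=x]$ as $\sum_{n}d_{mn}^\theta(t)\sum_{\zeta\subset\eta,|\zeta|=n}\mathcal{H}(\eta,\zeta)j_\zeta(x;\vartheta)/\e_\mu j_\zeta$. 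Passing to the limit $m\to\infty$ through the approximating measures $\mu_{n,m}$ as in the dual-process proof of Theorem \ref{main}, using $\lim_m d_{mn}^\theta(t)=d_n^\theta(t)$ together with the Kerov--Okounkov--Olshanski asymptotics of \cite{KOO}, produces the claimed kernel $\mathcal{K}_n(x,y)=\sum_{|\eta|=n}j_\eta(x;\vartheta)j_\eta(y;\vartheta)/\e_\mu j_\eta$. Reversibility and invariance then come for free: each $\mathcal{K}_n$ is symmetric in $x,y$, so the semigroup is $\mu$-symmetric, and since $\sum_{|\eta|=n}j_\eta(\cdot;\vartheta)=\Phi_{\cdot}(\varphi_1^n)\equiv1$ every $\mathcal{K}_n$ integrates to one against $\mu$, whence $\mu$ is stationary.

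The hard part --- and the reason the statement is left as a conjecture --- is claim (i): showing that the abstract prescription (\ref{conj2}) on the algebra $\mathcal{A}_J^o$ closes to the generator of a conservative Feller diffusion on $\Omega$ for an unrestricted $\mu$. In the $Z$-measure case this was handed to us from outside, namely Olshanski's explicit second-order operator $A_{z,z^{\prime},\vartheta}$ of \cite{Ol_2} and his diffusion approximation of the up-down chains; for a general $\mu$ one is given only the action of $\mathcal{B}_\mu^J$ on the basis $\{g_\eta\}$, and when this is rewritten on $\{j_\eta\}$ the coefficients carry the ratios $\e_\mu j_\eta/\e_\mu j_\zeta$, which need not possess the multiplicative structure that made $A_{z,z^{\prime},\vartheta}$ a local, second-order diffusion operator. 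I would therefore attempt (i) by checking the Hille--Yosida and positive-maximum-principle conditions directly, most naturally through well-posedness of the martingale problem for $\mathcal{B}_\mu^J$, and by verifying that $\mathcal{B}_\mu^J$ is a derivation-type second-order operator admitting a carré-du-champ rather than a pure jump operator on $\Omega$. The genuine obstruction is that no structural hypothesis on $\mu$ is assumed: without extra regularity one cannot guarantee a second-order form, a spectral gap, or even non-explosion, and for badly behaved $\mu$ the diffusion may fail to exist --- which is precisely what keeps this result at the level of a conjecture.
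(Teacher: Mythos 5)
You were not asked to reproduce a proof the paper actually contains: this statement is the paper's Conjecture~2, stated in the closing discussion with no proof, and the paper's genuine dual-process argument (Section~\ref{s4}) covers only the $Z$-measure diffusion with $\vartheta=1$, where existence of the diffusion is supplied externally by Olshanski's operator $A_{z,z^{\prime},1}$ and the intertwining by Lemma~5.4 of \cite{BO2}, i.e.\ equation~(\ref{recur}). Your proposal correctly recognizes this, and your division of the statement into (i) existence of the diffusion generated by (\ref{conj2}) and (ii) the density formula is the right map of the territory. Your conditional derivation of (ii) from (i) follows exactly the template of Section~\ref{s4}: duality via Theorem~4.4.1 of \cite{EK}, the law of the dual process as in Proposition~\ref{dual-prob1} (which indeed depends only on the graph weights and the rates $\frac{n(n-1+\theta)}{2}$, hence transfers to general $\vartheta$ with $\dim^J_{\vartheta}$ in place of $\dim^J_{1}$), the Kerov--Okounkov--Olshanski asymptotics from \cite{KOO}, and $\lim_{m\to\infty}d^{\theta}_{mn}(t)=d^{\theta}_{n}(t)$. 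Your observation that the intertwining $\mathcal{B}^J_{\mu}F=\mathcal{L}F$ holds by construction for general $\mu$ --- whereas for $\vartheta=1$ it required the nontrivial identity (\ref{recur}) --- is also correct, and it is precisely what makes (ii) the soft half. Finally, your identification of (i), the closability of $\mathcal{B}^J_{\mu}$ on $\mathcal{A}^{o}_{J}$ to the generator of a conservative Feller diffusion for an unrestricted $\mu$, as the genuine open obstruction is exactly why the paper can only state this as a conjecture; no proof of it exists in the paper or, to my knowledge, elsewhere.

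Two hypotheses your sketch uses silently should be made explicit. First, the basis $\{g_{\eta}\}$ is only defined if $\e_{\mu}[j_{\eta}]>0$ for every $\eta$; this can fail (take $\mu$ to be a point mass at $\omega=(1,0,\ldots;0,\ldots)\in\Omega$, where $j_{\eta}(\omega;\vartheta)=0$ unless $\eta$ is a one-row diagram), so the conjecture implicitly restricts the admissible $\mu$. Second, invoking Theorem~4.4.1 of \cite{EK} requires more than the pointwise generator identity: one needs the attendant integrability and boundedness conditions and well-posedness of at least one of the two martingale problems, which is again part of (i) rather than a consequence of it. Neither point changes your conclusion; both sharpen where the difficulty sits.
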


\section{Appendix}
%If your paper includes appendices, then precede the first of them by the command
\appendix
%and then carry on using the \section and \subsection commands, as above.
In this section, we will discuss a few facts about the symmetric functions. Please refer to \cite{Mac} for further details. Symmetric functions are defined as inverse limits of symmetric polynomials. For $n\in\mathbb{Z}_{+}$, let $\Lambda_{n}$ be the ring of symmetric polynomials of variables $x_1,\cdots,x_n$. Define $\rho_{n+1,n}: \Lambda_{n+1}\to\Lambda_{n}$ as follows 
\begin{equation}\label{inver}
\rho_{n+1,n}(f(x_1,\cdots,x_{n},x_{n+1}))=f(x_{1},\cdots,x_{n},0)
\end{equation}
Then $\Lambda$ is the inverse limit of $\Lambda_n,n\geq1$.
\section{Monomial Symmetric Functions} Consider symmetric polynomials 
$$
m_{\eta}^n(x_1,\cdots,x_n)=\sum_{\eta=(a_{(1)},\cdots,a_{(n)})}x_{1}^{a_1}\cdots x_{n}^{a_n}
$$
where $(a_{(1)},\cdots,a_{(n)})$ is the descending arrangement of $(a_1,\cdots, a_n)$. One can see that the equation (\ref{inver}) is also true for $m_{\eta}^n(x_1,\cdots,x_n)$. Then one can define $m_{\eta}(x)$ as an inverse limit of $\{m_{\eta}^n\mid n\geq1\}$, and it is called monomial symmetric function. The evaluation of $m_{\eta}(x)$ in the Kingman simplex $\overline{\nabla}_{\infty}$ can be done through continuous extension of $m_{\eta}|_{\nabla_{\infty}}$. Moreover, $m_{\eta}(x)$ satisfies the Pieri formula
$$
m_{\eta}(x)=\sum_{\eta\subset\nu,|\eta|+1=|\nu|} \chi^{K}(\eta,\nu)m_{\nu}(x).
$$
and $1=\sum_{|\eta|=n}\dim^{K}(\eta)m_{\eta}(x)$.
\section{Shur Functions} Denote $S_n$ as symmetric group. Consider symmetric polynomials
$$
s_{\eta}^n(x_1,\cdots,x_n)=\frac{a_{\eta+\delta}^n(x_1,\cdots,x_n)}{a_{\delta}(x_1,\cdots,x_n)},
$$
where $\eta=(\eta_1,\cdots,\eta_n)$ and $\delta=(n-1,n-2,\cdots,1,0)$ are integer partitions, and 
$$
a_{\eta}(x_1,\cdots,x_n)=\sum_{\sigma\in S_n}\mathrm{Sgn}(\sigma)x_{\sigma(1)}^{\eta_1}\cdots x_{\sigma(n)}^{\eta_n}.
$$ 
One can also show that $\{s^n_{\eta}\mid n\geq1\}$ satisfy equation (\ref{inver}). Then the Shur function $s_{\eta}(x)$ is defined to be the inverse limit of $\{s^n_{\eta}\mid n\geq1\}$. Moreover, the Shur functions also satisfies the Pieri formula 
$$
s_{\eta}(x)=\sum_{\eta\subset\nu,|\eta|+1=|\nu|} \chi^{J}_{1}(\eta,\nu)s_{\nu}(x).
$$
and $1=\sum_{|\eta|=n}\dim^{J}_{1}(\eta)s_{\eta}(x)$, where $\dim^{J}_{1}=\frac{n!}{H(\eta;1)}$.

\section{Jack Functions} Jack polynomials $J_{\lambda}^n(x_1,\cdots,x_n;\vartheta)$ are defined to be the eigenfunctions of the Sekiguchi operators:
  \begin{align*}
  D(u;\vartheta)=&\frac{1}{\prod_{i<j}(x_i-x_j)}\det\left[x_i^{n-j}(x_i\frac{\partial}{\partial x_i}+(n-j)\vartheta+u)\right]_{1\leq i,j\leq n}\\
  D(u;\vartheta)J_{\lambda}^n(x;\vartheta)=&\left[\prod_{i=1}^{n}(\lambda_i+(n-i)\vartheta+u)\right]J_{\lambda}^n(x;\vartheta)
  \end{align*}
  One can also show that $\{J^n_{\eta}(x;\vartheta)\mid n\geq1\}$ satisfy the equation (\ref{inver}). Then the Jack symmetric function $J_{\lambda}(x_1,\cdots,x_n,\cdots;\vartheta)$ is defined to be the inverse limit of $\{J^n_{\eta}(x;\vartheta)\mid n\geq1\}$.  Moreover, the Jack functions satisfy the Pieri formula 
  $$
J_{\eta}(x;\vartheta)=\sum_{\eta\subset\nu,|\eta|+1=|\nu|} \chi^{J}_{\vartheta}(\eta,\nu)J_{\nu}(x;\vartheta).
$$
and
  $$
    \left(\sum_{i=1}^{\infty}x_i\right)^{n}=\sum_{|\eta|=n}\dim_{\vartheta}^{J}(\eta)J_{\eta}(x;\vartheta).
  $$

In particular, when $\vartheta=1$, $J_{\eta}(x;1)=s_{\eta}(x)$.

\end{document}